\crefname{hypothesis}{Hypothesis}{Hypotheses}
\title{Minimax Problems with Coupled Linear Constraints: \\
	Computational Complexity, Duality and Solution Methods}
\author{
Ioannis Tsaknakis\thanks{Department of Electrical and Computer Engineering, University of Minnesota, tsakn001@umn.edu}
\and
Mingyi Hong\thanks{Department of Electrical and Computer Engineering, University of Minnesota, mhong@umn.edu}
\and
Shuzhong Zhang\thanks{Department of Industrial and Systems Engineering, University of Minnesota, zhangs@umn.edu}
}
\newcommand{\x}{\mathbf{x}}
\newcommand{\y}{\mathbf{y}}
\newcommand{\lb}{\boldsymbol{\lambda}}
\newcommand{\cb}{\mathbf{c}}
\newcommand{\X}{\mathcal{X}}
\newcommand{\Y}{\mathcal{Y}}
\newcommand{\R}{\mathbb{R}}
\newcommand{\xx}{\x \in \mathcal{X}}
\newcommand{\yy}{\y \in \mathcal{Y}}
\newcommand{\xo}{\overline{\x}}
\newcommand{\yo}{\overline{\y}}
\newcommand{\lo}{\overline{\lb}}
\newcommand{\xt}{\widetilde{\x}}
\newcommand{\yt}{\widetilde{\y}}
\newcommand{\lt}{\widetilde{\lb}}
\newcommand{\g}{\nabla}
\newcommand{\gx}{\nabla_{\x}}
\newcommand{\gy}{\nabla_{\y}}
\begin{document}

 \title{Minimax Problems with Coupled Linear Constraints: \\
 	Computational Complexity and Duality}

 \author{
 Ioannis Tsaknakis\thanks{Department of Electrical and Computer Engineering, University of Minnesota, tsakn001@umn.edu}
 \and
 Mingyi Hong\thanks{Department of Electrical and Computer Engineering, University of Minnesota, mhong@umn.edu}
 \and
 Shuzhong Zhang\thanks{Department of Industrial and Systems Engineering, University of Minnesota, zhangs@umn.edu}
 }

\maketitle


\begin{abstract}
In this work we study a special minimax problem where there are linear constraints that couple both the minimization and maximization decision variables. 
The problem is a generalization of the traditional saddle point problem (which does not have the coupling constraint), and it finds applications in wireless communication, game theory, transportation,  just to name a few. We show that the considered problem is challenging, in the sense that it violates the classical max-min inequality, 
and that it is NP-hard even under very strong assumptions (e.g., when the objective is strongly convex-strongly concave). We then develop a duality theory for it, and analyze conditions under which the duality gap becomes zero.
Finally, we study a class of stationary solutions defined based on the dual problem, and evaluate their  practical performance in an application on adversarial attacks on network flow problems.
\end{abstract}

\begin{keywords}
  min-max problem, coupled constraints, duality theory, computational complexity, max-min inequality.
\end{keywords}

\begin{AMS}
  49K35, 65K05, 90C47
\end{AMS}

\section{Introduction}

The \textit{minimax optimization problem}, given below: 
\begin{align}\label{eq:minmax}
\min_{\xx} \max_{\yy} f(\x,\y)
\end{align}
finds applications in areas such as machine learning,  game theory,   signal processing, and it has been extensively studied in  recent years \cite{nouiehed2019solving, lu2020hybrid,lin2020gradient, lin2020near, yang2020global}. 
Its specific applications include adversarial learning \cite{madry2017towards}, reinforcement learning \cite{qiu2020singletimescale}, 
resource allocation in wireless communication \cite{lu2020hybrid}, and Generative Adversarial Networks (GAN) \cite{goodfellow2014generative}, among others. 

In this work\footnote{ Due to space limitations we were unable to include all the results of our work in this manuscript. The interested readers can refer to an extended version of this paper which is available online \cite{tsaknakis2021minimax}.} we consider a class of more general minimax problems, where the constraint set of the inner problem linearly depends on {\it both} optimization variables:  
	\begin{align}\label{eq:minmax-coupled}
	\min\limits_{\x \in \X} 
	\left(\max\limits_{\y \in \Y, A\x+B\y\leq \bf{c}}\tag{mM-I} f(\x,\y)\right). 
	\end{align}
In the above expression, $f(\x,\y):\R^{n} \times \R^{m} \to \R$, $\X\subseteq \R^{n}, \Y \subseteq \R^{m}$, $A \in \R^{k \times n}, B \in \R^{k \times m}, c \in \R^{k}$.  Let us use $[A\x+B\y]_i\le \mathbf{c}_i$ to denote the $i$th constraint, and define $\mathcal{K}:=\{1,\cdots, k\}$ as the index set of the constraints. 
As a naming convention, in this work we will use `M' and `m' to denote the maximization and minimization problems, respectively. Therefore, we refer to the above problem as a {\it miniMax with Inner-level coupling (mM-I)} problem.  Further, the maximization variable is always $\y$, while the minimization variable is always $\x$.

 Problems \eqref{eq:minmax} and  \eqref{eq:minmax-coupled} appear to be closely related, so one might think that they have similar properties.
For example, the original problem \eqref{eq:minmax} is easily solvable when $f(\x,\y)$ is convex in $\x$ and concave in $\y$, so this may lead to the belief that the linearly constrained version is still relatively easy. 
However, we will show that problem \eqref{eq:minmax-coupled} is \textit{NP-hard} in general, even when $f(\x,\y)$ is strongly-convex  and strongly-concave. Further, the classical max-min inequality \cite[Theorem 1.3.1]{nesterov2018lectures} does not hold for such a class of problems. Finally, existing algorithms developed for minimax problems cannot be  applied directly because they can get stuck at some uninteresting solution points. 
On the other hand, compared with \eqref{eq:minmax}, these problems can be used to model a wider class of applications, a few of which are presented below.

\subsection{Representative Applications}\label{sec:apps}

\subsubsection{Adversarial attacks in resource allocation problems}\label{sec:res_alloc}

Consider a setting where a player (the `user') aims to optimally allocate a resource of fixed total amount $c_0 \in \R$, across $n$ different tasks. The goal is to maximize certain utility which is a function of the allocation. 
Also,  there is an adversary that tries to reduce the utility of the user, by designing 
an allocation strategy across the $n$ tasks that `forces' the user (which utilizes the remaining resource) to select an allocation that lowers its utility. As a concrete example, let us consider a transmission rate maximization problem under the presence of a jammer, which arises in wireless communications. 

Specifically, we consider a problem where a single user (i.e., a transmitter-receiver pair) transmits messages over $n$ channels. The goal of this user it to allocate power (the `resource') across the $n$ channels (the `tasks') such that its transmission rate (the `utility') is maximized. Suppose that there is a jammer (the `adversary') in the system, whose goal is to minimize the user's rate (e.g., \cite{gohary2009generalized}). 
In this problem we assume that the channel is Gaussian, and denote with $\x, \y$ the power allocations of the user and the jammer, respectively. Then, the user's system rate is given by \cite{gohary2009generalized,lu2020hybrid}
$$
R(\x,\y) = \sum\limits_{i=1}^{n} \log\left(1+ \frac{g^{u}_{i} x_{i}}{\sigma^{2} + g^{j}_{i} y_{i}} \right),
$$
where $g^{u}_{i}$, $g^{j}_{i}$ are the gains of the user and the jammer on channel $i$, respectively, and $\sigma^{2}$ is the noise power.
Note that both players' power allocations are subject to constraints $\X=\{\x \in \R^{n}\mid \x \ge {\bf{0}}, \sum_{i=1}^{n} x_i \leq \bar{x}\}$ and $\Y=\{\y \in \R^{n}\mid \y\ge{\bf{0}}, \sum_{i=1}^{n} y_i \leq\bar{y}\}$, where $\bar{x},\bar{y}$ are the total power budgets for the user and the adversary, respectively. 
Moreover, in certain communication systems such as cognitive radio network \cite{yang2013robust}, it is required that the total transmission power on each channel is upper bounded by the so-called {\it interference temperature} (denoted by a constant $c>0$), in order to limit the {\it total} interference caused to other (perhaps more important) co-channel users in the system.  
Consequently, the problem of how to {\it optimally}  attack the user in such a multi-channel wireless system can be formulated as the following linearly constrained minimax problem, which is a special case of \eqref{eq:minmax-coupled}:
\begin{align*}
\min\limits_{\y \in \Y} 
\left(\max\limits_{\x \in \X, \x+\y \leq \bf{c}} R(\x,\y)\right). 
\end{align*}

\subsubsection{Adversarial attacks in network flow problems}\label{sec:net_flow}

Consider a \textit{flow network} represented by a directed graph $G=(V,E)$, where $V$ is the set of vertices and $E$ is the set of edges. Let  $x_e$  and $p_e$ denote the flow and capacity on each edge $e\in E$, respectively; let $\x:=\{x_e\}_{e\in E} \in \R^{|E|}$ and $\mathbf{p}:=\{p_e\}_{e\in E} \in \R^{|E|}$ denote the vectors of edge flows and the edge capacities, respectively. 
Let  $q_{e}(x_{e})$ denote the cost of moving one unit of flow across edge $e$,  
and  $s$ and  $t$  the source and sink node, respectively.  
Suppose $F$ is a set that collects all the edges used by $\x$ to deliver a total of $r_t$ units of flow from $s$ to $t$. Then  the total transportation cost is defined as $q_{tot}(\x) = \sum_{e \in F} q_{e}(x_{e}) x_{e}$. 

A minimum cost network flow problem can be defined as finding the paths with the minimum cost from source to sink, so that we can successfully transport certain amount of flow \cite{cormen2009introduction}. Let us consider an extension to such a problem, where in addition to the regular network user, there is an adversary who will inject flows to the network to force the regular user to use more expensive paths. Let $\y:=\{y_e\}_{e\in E}$ denote the set of flows controlled by the adversary, and $b>0$ be its total budget. Then, the problem of adversary can be formulated as follows:
\begin{align}\label{eq:net_atttack}
	\max\limits_{\stackrel{\bf{0} \leq \bf{y} \leq \bf{p}}{\sum_{(i,j) \in E} y_{ij} = b}} 
	 & \min \limits_{\stackrel{\bf{0} \leq \bf{x} \leq \bf{p}}{\sum\limits_{(i,t) \in E} x_{it} = r_t} } \sum_{(i,j) \in E} \overline{q}_{ij}(x_{ij}, y_{ij})x_{ij}  \\
	 \text{s.t. } & \bf{x} + \bf{y} \leq \bf{p} \nonumber \\
	 &\hspace{-3mm} \sum_{(i,j) \in E} x_{ij} - \sum\limits_{(j,k) \in E} x_{jk}= 0, \; \forall j \in V \setminus \{s,t\}, \nonumber
\end{align}
where $\overline{q}_{ij}(x_{ij}, y_{ij})$ is related to the cost of routing both $x_{ij}$ and $y_{ij}$ onto the network; a simple choice is $\overline{q}_{ij}(x_{ij}, y_{ij}) = q_{ij}(x_{ij}+y_{ij})$. 
We note that such a kind of  attack falls under the scope of network interdiction problems \cite{smith2013modern} and there are several works in literature that study variants of this application \cite{fu2019network, salmeron2004analysis, salmeron2009worst}. We note that problem \eqref{eq:net_atttack} can be used to model attacks for real systems such as  
 communication networks \cite{fu2019network} or power networks \cite{salmeron2004analysis}, in which the adversary aims to maximize the cost of the network owner by injecting spurious traffic or disabling network components.

\subsection{Contributions}

In this work, we study the minimax problem with coupled linear constraints \eqref{eq:minmax-coupled}. Our main contributions are listed below:

\noindent $\bullet$ We 
 first identify relationships between \eqref{eq:minmax-coupled} and a number of its variants, and  show that even when $f(\x,\y)$ is strongly-convex in $\x$ and strongly-concave in $\y$, the well-known max-min inequality does not hold true. Additionally, we show that  \eqref{eq:minmax-coupled} and its variants are NP-hard in general.
 
\noindent $\bullet$ 
We  develop a duality theory under the assumption that the objective $f$ is strongly concave w.r.t $\y$. Specifically, we define three different dual problems, identify the conditions under which strong duality holds, and establish the equivalence of their solutions to that of the original primal problem. 

 \noindent $\bullet$ Based on our developed dual problems, we identify a new stationary solution concept, and develop a first-order algorithm to evaluate the practical performance of such a solution concept on an adversarial attack problem for network flows.

 We emphasize that in order to evaluate the quality of the proposed stationary solution concept, we need to develop an efficient numerical algorithm. The algorithm itself is new (to our knowledge),  but its analysis is relatively standard, and we do not have the space to include  the details here; the algorithmic development is indeed not the focus of this paper. 

\subsection{Related Works}\label{sec:related}

The class of problems \eqref{eq:minmax-coupled} is closely related to problems in optimization and game theory. Below, we review the related literature.  

\smallskip
\noindent{\bf Minimax problems.} These problems have been extensively studied  
under different assumptions for the objective $f$. For instance, there are works for (strongly)-convex (strongly)-concave \cite{mokhtari2019convergence, mokhtari2019unified, yan2020optimal}, non-convex (strongly)-concave \cite{lin2020gradient, lin2020near, lu2020hybrid, nouiehed2019solving}, and non-convex non-concave \cite{liu2019better, nouiehed2019solving, yang2020global} problems. We refer the interested readers to a recent survey about detailed developments \cite{razaviyayn2020nonconvex}.  Specifically, in \cite{lin2020gradient} the `classical' Gradient Descent-Ascent (GDA) algorithm is analyzed, and convergence is established to stationary solutions, under the assumption that the objective is non-convex (strongly)-concave. Moreover, in \cite{mokhtari2019unified} two extensions of GDA are studied, namely the Optimistic Gradient Descent-Ascent (OGDA), and the Extragradient (EG) algorithm, albeit in a convex-concave setting. In another extension, the inner-level problem is solved using multiple steps of a first-order algorithm; the convergence of this scheme is analyzed in \cite{nouiehed2019solving} assuming that the objective is non-convex in $\x$, and satisfies either the PL condition or is concave in $\y$.

The above works focus on problems {\it without} any constraints that couple the inner and outer variables. In these problems, the optimality conditions that are typically used are: a) the stationarity conditions of objective $f$ w.r.t $\x$ and $\y$, and b) the stationarity condition of the upper-level objective $\rho(\x) = \max_{\yy} f(\x,\y)$, i.e., $0 \in \partial \rho(\x)$. However, these conditions were defined for problems with no coupled constraints, and thus they cannot be applied in our case. Therefore, it is no longer clear how algorithms such as OGDA can be used. 
Consequently, the analysis of \eqref{eq:minmax-coupled} requires different approaches. 

There are a couple of very recent works that study min-max problems with coupled constraints similar to \eqref{eq:minmax-coupled}. In \cite{dai2020optimality} the authors extend the notion of local minimax points (e.g. \cite{jin2019local}) for min-max problems with coupled constraints, and  derive the respective necessary and sufficient optimality conditions. However, no duality theory or algorithms are developed.
Moreover, in \cite{goktas2021convexconcave} the authors study a min-max problem where the objective is convex-concave, the coupled constraints are of the form $g(\x, \y) \geq 0$, for some function $g(\x, \y)$ that is concave in $\y$, and the solution concept they consider is the same as in this work. Most importantly they also assume that the inner function $\phi(\x) = \max_{\y \in \Y, g(\x,\y)\geq\bf{0}} f(\x,\y)$ is convex, which makes the problem tractable. On the contrary, we make no such assumption, and develop our theory and algorithm for the case where the problem is still difficult (i.e., NP-hard).

\smallskip
\noindent{\bf Generalized Nash equilibrium games.} Our problem formulation is also related to the \textit{generalized Nash equilibrium (GNE)} game \cite{facchinei2010generalized, dreves2011solution, fischer2014generalized}, which is an extension of Nash games where the action space of each player depends on the actions of the other players. Specifically, a two player  GNE game can be formulated as follows \cite{facchinei2010generalized}:
\begin{align}\label{eq:gne}
\min\limits_{\x \in \X (\y)} f_1(\x,\y), \;\; \min\limits_{\y \in \Y (\x)} f_2(\x,\y),
\end{align}
where $f_1$, $f_2$ are the utilities of two players.
Problem \eqref{eq:minmax-coupled} is related to GNE games in the zero-sum case where $f_1 = -f_2$. However, in GNE games  the goal is to attain a Nash equilibrium, and not a minimax/maximin point. In other words, unlike problems \eqref{eq:minmax-coupled} the two problems do not have an order. In addition, note that for GNE games, the equilibirum solutions do not always exist, and certain (strong) assumptions are required; for instance the utilities $f_1, f_2$ need to be quasi-convex \cite{ichiishi2014game}.

One technique that was developed for GNEs \cite{von2009optimization} utilizes the Nikaido-Isoda function \cite{nikaido1955note} to reformulate the GNE-finding problem to a constraint optimization task. However, the main results in this case have been obtained under the assumption that the utilities are convex.
Also, a number of penalty approaches \cite{pang2005quasi, facchinei2010penalty} have been proposed, nonetheless these are considered difficult to solve in practice. Another approach reduces the problem of finding the KKT points of a GNE game to a variational inequality problem \cite{aussel2008generalized}.
Overall, the above approaches cannot be applied in problem  \eqref{eq:minmax-coupled} because they were developed for problems of the form \eqref{eq:gne}, where each player has its own utility function, and the order of play does not matter.

\smallskip
\noindent{\bf Stackelberg games.} Another class of game-theoretic problems that are closely related to problem \eqref{eq:minmax-coupled} are \textit{Stackelberg games} \cite{von1952theory, osborne1994course, wang2019solving}, in which there are two players, the leader and the follower. The leader  can anticipate the follower's actions. In this case, the order in which the two players act matters.
Then a (general-sum) Stackelberg game can be formulated as the following bi-level program:
\begin{align*}
\min\limits_{\x \in \X} F(\x,\y) 
\text{ s.t. } \y \in \arg\min\limits_{\y \in \Y} f(\x,\y).   
\end{align*}
Therefore, these games are in some sense more general than problem \eqref{eq:minmax-coupled}, which can only model zero-sum Stackelberg games.
On the other hand, typically in Stackelberg games the action space of the follower (resp. the  leader) is independent of the action of the leader (resp.\ the follower); see, e.g.,  \cite[sec.\ 2.6.1]{zhang2015multi}. As a result,  problem \eqref{eq:minmax-coupled} extends the zero-sum Stackelberg games by taking into consideration the interactions between the players' actions. 

\smallskip
\noindent{\bf Bi-level optimization.} Finally, we would like to discuss the relationship between problem \eqref{eq:minmax-coupled} and bi-level optimization problems of the following form:
\begin{align*}
\min\limits_{\x \in \X} F(\x,\y) \quad 
\text{ s.t. } \y \in \arg\min\limits_{\y \in \Y(\x)} f(\x,\y).   
\end{align*}
Bi-level optimization  was formally introduced in \cite{bracken1973mathematical}, and it is also related to the broader class of problems of Mathematical Programming with Equilibrium Constraints \cite{luo_pang_ralph_1996}.  A generic instance of bi-level problems includes all minimax problems as special cases, with or without coupled constraint; see \cite{colson2005bilevel, dempe2020bilevel, liu2021investigating} for a number of survey papers. However, bi-level optimization problems are in general very challenging to solve. More precisely \cite{liu2021investigating}: 1) even linear bi-level problems are NP-hard, 2) the lower-level problem might have multiple solutions, 3) the feasible region defined by the lower-level problem can be a non-convex set, 4) they are non-smooth in general.

Several techniques have been developed in the literature for solving these problems. One such technique uses the value function $V(\x) = \max_{\y \in \Y(\x)} f(\x,\y)$ to express the lower-level problem as an inequality constraint, and transforms the bi-level problem into a single-level optimization task \cite{ye1995optimality, lin2014solving}. However, not only this reformulated problem does not satisfy any of the known constraint qualifications (CQ), but also the non-smoothness of $V(\x)$ makes the problem difficult. 
Alternative CQs such as calmness conditions have been proposed, 
which are trivially satisfied in minimax problems \cite{ye1995optimality}. Based on the calmness condition, a penalty method has been developed in \cite{lin2014solving}, but it only applies to bi-level problems in which the  constraints set of the lower-level problem is independent of the variable of the upper one.
Overall, this generic theory does not provide much understanding about the structure of our problem.

There is a recent line of work in (stochastic) single and two-time scale gradient descent methods \cite{ghadimi2018approximation, hong2020two, khanduri2021momentumassisted, chen2021singletimescale}. Under the assumption that the lower-level problem is strongly-convex (and thus admits a unique solution) and unconstrained, the authors attempt to minimize the objective $\phi(\x) = F(\x,\y^{\ast}(\x))$. In this case the implicit function theorem provides access to the gradient $\g \phi(\x)$. 
Unfortunately, in our case the constraints on the lower-level problem prevent us from using these type of methods.

All the above approaches were developed for general bi-level problems. As a result, they were designed to be suitable for a wide range of problems. Moreover, these approaches are either not applicable to our problem, or even if they are, they are not very attractive (e.g., not very efficient, do not have iteration complexity guarantees, difficult to analyze due to non-smoothness). On the other hand, problem \eqref{eq:minmax-coupled} has a special minimax structure, that is the lower-level objective $f$ is the negative of the upper level one $F$, and the coupling constraints are linear in the decision variables.  Therefore, in contrast to the general methods presented here, our goal is to leverage the special structure of problem \eqref{eq:minmax-coupled} to design methods tailored for it.

\section{Minimax problems with coupled linear constraints}\label{sec:coupled}

In this section, we derive a few unique properties about problem \eqref{eq:minmax-coupled}, and outline the challenges in designing efficient algorithms for it.
To begin with our analysis, let us first introduce a number of variants of problem of \eqref{eq:minmax-coupled}, and understand their relations. 

\begin{definition}%
\label{def:prob_coup_lin}
We define the following problems:
\begin{enumerate}[label=\Alph*.]
\item\label{prob_inn} Inner-Level Coupling (I)
\begin{enumerate}[label=\arabic*.]
\item miniMax with Inner-Level Coupling (mM-I)
\begin{equation}\tag{mM-I}\label{eq:inn_minimax}
\min\limits_{\x \in \X} 
\left( \max\limits_{\y \in \Y, A\x+B\y\leq\bf{c}} f(\x,\y) \right).
\end{equation}

\item Maximin with Inner-Level Coupling (Mm-I)
\begin{equation}\tag{Mm-I}\label{eq:inn_maximin}
\max\limits_{\y \in \Y} 
\left( \min\limits_{\x \in \X, A\x+B\y\leq\bf{c}} f(\x,\y) \right).
\end{equation}
\end{enumerate} 

\item\label{prob_out} Outer-Level Coupling (O)
\begin{enumerate}[label=\arabic*.]
\item miniMax with Outer-Level coupling (mM-O)
\begin{align}\tag{mM-O}\label{eq:out_minimax}
&\min\limits_{\x \in \X, A\x+B\y^*(\x) \leq \bf{c}} 
\left(\max\limits_{\y \in \Y} f(\x,\y) \right), \;\; \y^*(\x) \in \arg \max_{\y \in \Y} f(\x,\y) \nonumber.
\end{align}

\item Maximin with Outer-Level coupling (Mm-O)
\begin{align}\tag{Mm-O}\label{eq:out_maximin}
&\max\limits_{\y \in \Y, A\x^*(\y)+B\y \leq \bf{c}} 
\left(\min\limits_{\x \in \X} f(\x,\y) \right), \;\; \x^*(\y) \in \arg \min_{\x \in \X} f(\x,\y) \nonumber.
\end{align}
\end{enumerate} 

\end{enumerate}
\end{definition}


 We note that for the two outer-level coupled problems, the constraint set of the outer problem depends on the solutions $\y^*(\x)$ of the inner one. Since  $\y^*(\x)$ is typically a non-linear function, technically such a constraint is not a linear one any more.  Further, at this point these problems are under-specified, as it is not clear which of the solutions of $\y^*(\x)$ and $\x^*(\y)$ (e.g. one, some, or all) are expected to satisfy the constraints $A\x+B\y^*(\x) \leq \bf{c}$ and $A\x^*(\y)+B\y \leq \bf{c}$, respectively.
 Note that these two outer-level coupling problems are mainly introduced to better understand the two inner-level coupling problems. They have their own special structures, therefore require a separate treatment (which is beyond the scope of the current paper). 
 
To proceed, let us define the global minimax (or maxmin) solutions for these problems.   
For problem \eqref{eq:inn_minimax}, a point $(\x^*,\y^*)$ is the solution, if the following holds:
\begin{subequations}\label{eq:opt:inner}
\begin{align}
  &\x^* \in \arg\min_{\xx} \phi(\x), \quad \mbox{\rm where}\quad  \phi(\x) : = \max_{\y \in \Y, A\x+B\y\leq\bf{c}} f(\x,\y) \\
  &\y^* \in \arg\max_{\y \in \Y, A\x^*+B\y\leq\bf{c}} f(\x^*,\y).
  \label{eq:opt:inner2}
\end{align}
\end{subequations}

 For problem \eqref{eq:inn_maximin}, the global maxmin solution can be defined similarly.
Moreover, a solution $(\x^*,\y^*)$ of \eqref{eq:out_minimax} is defined as:
\begin{subequations}\label{eq:opt:outer}
\begin{align}
   &\y^* \in \arg\max_{\y \in \Y} f(\x^*,\y)\\
   &\x^* \in \arg\min_{\xx, A\x + B\y^* \leq \cb} \tilde{\phi}(\x), \quad \mbox{\rm where}\quad  \tilde{\phi}(\x) : = \max_{\y \in \Y} f(\x,\y).\label{eq:opt:outer2} 
\end{align}
   \end{subequations}
The solution of problem \eqref{eq:out_maximin} can be defined similarly.
Next, we provide necessary conditions for the existence of those solutions.

\begin{assumption}\label{as:feas}
Suppose the following conditions hold true. 
\begin{enumerate}[label=\alph*.]

\item \label{ass:cont} The function $f(\x,\y):\R^{n} \times \R^{m} \to \R$ is continuous w.r.t. both $\x$ and $\y$.

\item \label{ass:compact} The sets $\X$, $\Y$ are convex and compact, with $\|\x\| \leq D, \; \forall \x \in \X, \|\y\| \leq D, \;\forall \y \in \Y$, for some constant $D>0$.

\item \label{ass:feasibility} (Feasibility) The following feasibility conditions hold: 
\begin{itemize}
\item In problem \eqref{eq:inn_minimax}, 
 $\forall \xx, \exists \yy$, such that $A\x + B\y - \cb \leq {\bf{0}}$.
\item In problem \eqref{eq:inn_maximin}, $ \forall \yy, \exists \xx$, such that $A\x + B\y - \cb \leq {\bf{0}}$.
\item In problem \eqref{eq:out_minimax},
$\exists \xx$  such that $A\x + B\y^*(\x) - \cb \leq {\bf{0}}$ holds, for at least one solution $\y^*(\x)$.
\item In problem \eqref{eq:out_maximin}, $\exists \yy$  such that $A\x^*(\y) + B\y - \cb \leq {\bf{0}}$ holds, for at least one solution $\x^*(\y)$.
\end{itemize} 

\end{enumerate}
\end{assumption}

\begin{remark}
Under Assumption \ref{as:feas}, the inner-level tasks introduced in Definition \ref{def:prob_coup_lin} are well-defined.  
Specifically,  $\forall \xx$ in problems \eqref{eq:inn_minimax}, \eqref{eq:out_minimax}, the optimal solution of the inner problem exists, (or $\forall \yy$ in problems \eqref{eq:inn_maximin}, \eqref{eq:out_maximin}, the same holds). This is a result of Assumptions in \ref{as:feas}.\ref{ass:feasibility}, which ensure the feasibility of the inner problem, and Assumptions \ref{as:feas}.\ref{ass:cont}, \ref{as:feas}.\ref{ass:compact}, which guarantee the existence of a global solution, through Weierstrass' theorem. 
In addition, Berge's theorem of the maximum \cite[Theorem 17.31]{aliprantis1999infinite} implies that $\phi(\x)$ and $\tilde{\phi}(\x)$ are continuous functions.  This continuity property combined the compactness of the constraint sets, ensure the existence of global solutions in problems \eqref{eq:opt:inner2} and \eqref{eq:opt:outer2}, through  Weierstrass' theorem.
\end{remark}

Under Assumption \ref{as:feas}, 
it is easy to argue that
$f$ is lower/upper-bounded over $\X \times \Y$. That is, there exist finite constants $\underline{f}, \overline{f}$ such that:
\begin{align}\label{eq:bound}
 \underline{f} \le  f(\x,\y) \le \overline{f}, \; \forall~\x \in X, \; \y\in Y.
\end{align}%
Next, we 
characterize the relationships between the four problems in Definition \ref{def:prob_coup_lin}. As a convention, we will denote the {\it optimal} objective value of a problem $P$ as $v \left(P \right)$.

\begin{proposition}[Relations between problems]\label{prop:prob_relat}
For the values of the problems \eqref{eq:inn_minimax}, \eqref{eq:inn_maximin}, \eqref{eq:out_minimax} and  \eqref{eq:out_maximin}, the following relationships hold:

\begin{itemize}
\item $v (\mbox{\rm \ref{eq:inn_minimax}}) \leq v (\mbox{\rm \ref{eq:out_minimax}})$;

\item $v (\mbox{\rm \ref{eq:inn_maximin}}) \geq v (\mbox{\rm \ref{eq:out_maximin}})$;

\item $v (\mbox{\rm \ref{eq:out_minimax}}) \geq 
v (\mbox{\rm\ref{eq:out_maximin}})$.
\end{itemize}
Moreover, there is no definitive relation between the rest of the  pairs, namely the pairs 
$\left\{v (\mbox{\rm \ref{eq:inn_minimax}}), 
v (\mbox{\rm \ref{eq:inn_maximin})} \right\}$,
$\left\{v (\mbox{\rm \ref{eq:inn_minimax}}), 
v (\mbox{\rm \ref{eq:out_maximin})} \right\}$,
$\left\{v \mbox{\rm (\ref{eq:out_minimax}}), 
v (\mbox{\rm \ref{eq:inn_maximin}})\right\}$. That is, 
any relation of $<$, $=$, or $>$ can hold between them.
\end{proposition}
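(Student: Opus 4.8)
The plan is to establish the three asserted inequalities by a direct comparison of the feasible sets of the inner and outer optimization tasks, and then to dispose of the ``no definitive relation'' claims by exhibiting small explicit examples. For the first inequality, $v(\ref{eq:inn_minimax}) \le v(\ref{eq:out_minimax})$, I would argue as follows. Fix any $\x \in \X$ that is feasible for \eqref{eq:out_minimax}, meaning $A\x + B\y^*(\x) \le \cb$ for some maximizer $\y^*(\x) \in \arg\max_{\y \in \Y} f(\x,\y)$. Then $\y^*(\x)$ is in particular feasible for the inner problem of \eqref{eq:inn_minimax} at that same $\x$, so $\phi(\x) = \max_{\y \in \Y,\, A\x+B\y \le \cb} f(\x,\y) \ge f(\x, \y^*(\x)) = \tilde\phi(\x)$. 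But since the inner feasible region of \eqref{eq:inn_minimax} is a subset of $\Y$, we also have $\phi(\x) \le \tilde\phi(\x)$, hence $\phi(\x) = \tilde\phi(\x)$ for every $\x$ feasible for \eqref{eq:out_minimax}. Minimizing $\phi$ over all of $\X$ can only decrease the value relative to minimizing $\tilde\phi = \phi$ over the smaller feasible set of \eqref{eq:out_minimax}, which gives the claim. The second inequality, $v(\ref{eq:inn_maximin}) \ge v(\ref{eq:out_maximin})$, is the mirror image: for any $\y$ feasible for \eqref{eq:out_maximin} one shows the inner minimum value coincides with $\tilde\phi$ evaluated there, and maximizing over the larger set $\Y$ only increases the value.

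For the third inequality, $v(\ref{eq:out_minimax}) \ge v(\ref{eq:out_maximin})$, I would use the classical max–min inequality, which \emph{does} apply here because the coupling has been moved out of the inner problems: the inner problems of \eqref{eq:out_minimax} and \eqref{eq:out_maximin} are the standard uncoupled $\max_{\y}$ and $\min_{\x}$, respectively. Concretely, let $(\xo,\yo)$ be a point attaining $v(\ref{eq:out_maximin})$ and let $(\xt,\yt)$ attain $v(\ref{eq:out_minimax})$. One wants to interpose a common reference. The cleanest route is: $v(\ref{eq:out_minimax}) = \min_{\x \in S_{\rm mM}} \max_{\y\in\Y} f(\x,\y)$ and $v(\ref{eq:out_maximin}) = \max_{\y \in S_{\rm Mm}} \min_{\x\in\X} f(\x,\y)$, where $S_{\rm mM}, S_{\rm Mm}$ are the (nonempty, by Assumption~\ref{as:feas}) outer feasible sets. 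For the maximin side, pick $\yo \in S_{\rm Mm}$ attaining the outer max; then $v(\ref{eq:out_maximin}) = \min_{\x\in\X} f(\x,\yo) \le f(\xt,\yo) \le \max_{\y\in\Y} f(\xt,\y) = \tilde\phi(\xt) = v(\ref{eq:out_minimax})$, where the last equality uses that $\xt$ attains the outer min of \eqref{eq:out_minimax} and the equality $\tilde\phi \equiv \phi$ on that feasible set (or simply the definition of $\tilde\phi$). This chain only uses $\xt \in \X$ and $\yo \in \Y$, which hold, so it goes through.

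For the negative part — no definitive relation among $\{v(\ref{eq:inn_minimax}), v(\ref{eq:inn_maximin})\}$, $\{v(\ref{eq:inn_minimax}), v(\ref{eq:out_maximin})\}$, and $\{v(\ref{eq:out_minimax}), v(\ref{eq:inn_maximin})\}$ — I would construct three (or, by reuse, perhaps two) low-dimensional examples, e.g.\ with $n=m=1$, $\X = \Y = [0,1]$ or a similar interval, a quadratic or bilinear $f$, and a single coupling constraint $x + y \le c$ with $c$ tuned. For each pair, I would pick parameters realizing $<$, then realizing $=$, then realizing $>$; often scaling or reflecting $f$ and adjusting $c$ suffices to flip the sign. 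The main obstacle I anticipate is not any single inequality but the bookkeeping for these counterexamples: one must carefully evaluate $\phi$, $\tilde\phi$, and the constrained inner values as piecewise functions of the outer variable (the coupling constraint becomes active only on part of the domain), and verify Assumption~\ref{as:feas}.\ref{ass:feasibility} holds so the values are finite. I would organize this by first writing each of the four value functions explicitly for a generic one-parameter family, then reading off the three regimes for each pair. A secondary subtlety is the under-specification of the outer-level problems flagged after Definition~\ref{def:prob_coup_lin} (which $\y^*(\x)$ must satisfy the constraint); I would make sure the examples either have unique inner maximizers or state explicitly the ``at least one solution'' convention consistent with Assumption~\ref{as:feas}.\ref{ass:feasibility}, so the comparisons are unambiguous.
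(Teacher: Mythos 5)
Your treatment of the three inequalities is correct and follows essentially the same route as the paper. For the first two, the paper uses only the pointwise bound $\phi(\x)\le\tilde{\phi}(\x)$ together with the inclusion of the outer feasible set of \eqref{eq:out_minimax} in $\X$; you additionally observe that $\phi$ and $\tilde{\phi}$ actually coincide on the feasible set of \eqref{eq:out_minimax}, which is true but not needed. For the third, the paper chains $v(\mbox{\rm\ref{eq:out_minimax}}) \ge \min_{\xx}\max_{\yy}f \ge \max_{\yy}\min_{\xx}f \ge v(\mbox{\rm\ref{eq:out_maximin}})$ through the unconstrained values via the classical max--min inequality; your direct evaluation chain $v(\mbox{\rm\ref{eq:out_maximin}})=\min_{\xx}f(\x,\yo)\le f(\xt,\yo)\le\max_{\yy}f(\xt,\y)=v(\mbox{\rm\ref{eq:out_minimax}})$ carries the same content and is fine (modulo attainment of the outer optima, which the paper also assumes via its remark after Assumption~\ref{as:feas}).

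The genuine shortfall is in the second half of the proposition. For each of the three pairs, the claim that any of $<$, $=$, $>$ can occur is proved \emph{only} by exhibiting instances, and your proposal stops at the plan ("I would construct\dots I would pick parameters\dots"). Since the instances are the entire content of that half, the proof is incomplete as written. The examples required are in fact very small: the paper takes $A=B=\cb=0$, in which case all four problems collapse to ordinary minimax/maximin problems and the classical max--min inequality (with equality in the convex--concave compact case) yields one direction, including equality, for each pair; for the reverse strict inequality it uses $f(x,y)=x^2-y^2$ with the single equality constraint $x+y=1$ on intervals such as $[0,1]\times[0,1]$, $[-1,1]\times[0,2]$, and $[1,2]\times[-1,0]$, for which the coupled inner feasible set is a singleton and all four values are computed in one line each. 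Your proposed one-parameter family would work, but until the parameter values and the resulting value computations are written down, the ``no definitive relation'' claims remain unproven. Your closing remark about the under-specification of $\y^*(\x)$ is well taken; the paper sidesteps it because in its examples the inner maximizers are unique.
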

{\it Proof.} See Appendix \ref{app:minimax}.  

The most remarkable fact concerns the relation between  \eqref{eq:inn_minimax} and  \eqref{eq:out_maximin} [or between \eqref{eq:out_minimax} and \eqref{eq:inn_maximin}]. 
Note that after exchanging the min and max operators, \eqref{eq:out_maximin} becomes \eqref{eq:inn_minimax}.
Proposition \ref{prop:prob_relat} shows that that there is no definite relation  between \eqref{eq:inn_minimax} and  \eqref{eq:out_maximin}. 
On the contrary,  
in standard  minimax problems \eqref{eq:minmax}, the max-min inequality \cite[Theorem 1.3.1]{nesterov2018lectures} holds, that is
\begin{align}\label{eq:max-min-inequality}
    \max\limits_{\yy} \left( \min\limits_{\xx} f(\x,\y) \right)
\leq  \min\limits_{\xx} \left(\max\limits_{\yy} f(\x,\y) \right),
\end{align}
for any function $f$ and non-empty, closed sets $\X, \Y$. In addition, if $f$ is convex in $\x$ and concave in $\y$, and $\mathcal{X}, \mathcal{Y}$ are compact, then the values of these problems are equal.
 
The results shown in proposition \eqref{prop:prob_relat} indicate that the four problems we studied above should be harder than the standard minimax problems. Indeed, as we show below,  problems \eqref{eq:inn_minimax}-\eqref{eq:out_minimax} are all NP-hard in general, even when the objective function $f$ is strongly-convex strongly-concave, and the sets $\X$ and $\Y$ are compact.

\begin{proposition}\label{pro:np_hard}
Consider the \eqref{eq:inn_minimax} problem with a strongly-convex strongly-concave objective $f(\x,\y)$. This problem is NP-hard.
\end{proposition}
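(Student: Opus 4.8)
The plan is to reduce a known NP-hard problem to an instance of \eqref{eq:inn_minimax} with a strongly-convex--strongly-concave objective. The natural candidate is a QP-type problem with indefinite or sign-constrained structure, but since we insist $f$ be strongly convex in $\x$ and strongly concave in $\y$, the hardness cannot come from the objective itself — it must come from the interaction between the \emph{outer} minimization over $\x$ and the \emph{coupling constraint} $A\x + B\y \le \cb$ that restricts the inner feasible set. The key mechanism to exploit is that $\phi(\x) = \max_{\y\in\Y,\,A\x+B\y\le\cb} f(\x,\y)$ is, in general, a \emph{nonconvex} function of $\x$ (indeed this is exactly why the max-min inequality fails, per Proposition~\ref{prop:prob_relat}): moving $\x$ tightens or loosens the inner constraints in a way that can create multiple local minima of $\phi$. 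So I would aim to encode a combinatorial problem into the geometry of $\phi$.

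Concretely, I would try to reduce from a problem like \textsc{3-SAT} or, more conveniently, from the NP-hard problem of minimizing a convex quadratic over a set with complementarity/disjunctive structure — e.g. the problem of checking whether a system admits a solution with $x_i \in \{0,1\}$, which can be written via $x_i(1-x_i) \le 0$ together with $0 \le x_i \le 1$. The idea: choose $f(\x,\y) = \tfrac{\mu}{2}\|\x\|^2 - \tfrac{\mu}{2}\|\y - \y_0(\x)\|^2 + \ell(\x)$ (or a bilinear-free variant kept strongly convex/concave by a dominant quadratic), and design $A,B,\cb$ and $\Y$ so that, for a given $\x$, the inner maximization is forced to "report" a penalty whenever $\x$ violates an integrality or clause condition. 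Since the inner problem is a strongly concave maximization over a polytope, it has a unique solution $\y^*(\x)$ and $\phi(\x)$ is well-defined and continuous (by the Remark after Assumption~\ref{as:feas}); the point is to arrange that $\phi(\x)$ is minimized precisely at $\x$ encoding a satisfying assignment, with a detectable gap between "yes" and "no" instances. An alternative, possibly cleaner route: reduce directly from the NP-hardness of bilinear programming / the problem $\min_{\x}\max_{\y} \x^\top M \y$ over simplices with linear coupling, and then add strongly convex/concave regularizers $\tfrac{\epsilon}{2}\|\x\|^2 - \tfrac{\epsilon}{2}\|\y\|^2$ with $\epsilon$ small enough (polynomially bounded) that the optimal value moves by less than the yes/no gap — a standard perturbation argument.

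The steps, in order: (1) pick the source NP-hard problem and write it in a form with linear constraints plus one "bad" nonconvex or complementarity constraint; (2) construct the instance of \eqref{eq:inn_minimax}: specify $f$ (strongly convex in $\x$, strongly concave in $\y$, with modulus parameters written explicitly), $\X$, $\Y$ (compact convex), and $A,B,\cb$, all of polynomial size; (3) show feasibility (Assumption~\ref{as:feas}.\ref{ass:feasibility}) holds so the instance is legitimate; (4) analyze the inner problem — use strong concavity to get uniqueness of $\y^*(\x)$ and, ideally, a closed-form or KKT characterization of $\phi(\x)$; (5) prove the reduction's correctness: a "yes" instance gives $v(\eqref{eq:inn_minimax}) \le \alpha$ and a "no" instance gives $v(\eqref{eq:inn_minimax}) \ge \alpha + \gamma$ for an explicit polynomially-representable gap $\gamma$; (6) conclude NP-hardness (of the associated decision problem, hence of the optimization problem).

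The main obstacle will be step (4)–(5): controlling $\phi(\x)$ tightly enough. Because the inner maximization's feasible set depends on $\x$ through $A\x + B\y \le \cb$, the function $\phi$ is a partial-minimization/maximization of a smooth function over a moving polytope, and its landscape can be delicate — I need the construction to force $\phi$ to behave like a known hard objective (e.g. pick up a strictly positive penalty off the combinatorial solution set) while keeping everything strongly convex/concave. Getting a clean, provable separation between yes and no instances, rather than just "it looks hard," is where the real work lies; the regularization-perturbation trick (adding small strongly convex/concave terms to an existing NP-hard bilinear or Stackelberg instance and bounding the induced value change) is probably the most robust way to make this rigorous, and I expect the authors' proof takes essentially that route.
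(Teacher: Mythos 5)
Your high-level diagnosis is right---the hardness must be injected through the coupling constraint, and a quadratic cancellation device is the way to keep $f$ strongly convex--strongly concave---but the proposal stops exactly where the proof actually lives, and the one concrete route you name in detail is flawed. The paper's argument is a three-line construction that your sketch never reaches: impose the \emph{equality} coupling $\x-\y=\mathbf{0}$ (written as two opposite inequalities), which makes the inner feasible set the singleton $\{\y=\x\}$, so the inner maximization degenerates to the evaluation $f(\x,\x)$; then take $f(\x,\y)=\|\x\|^{2}+\tfrac12\x^{T}Q\y-\|\y\|^{2}+\mathbf{d}^{T}\x$ with $Q\preceq 0$, which is visibly strongly convex in $\x$ and strongly concave in $\y$, and for which the convex and concave quadratics cancel \emph{exactly} on the diagonal, leaving $\min_{\mathbf{0}\le\x\le\mathbf{1}}\tfrac12\x^{T}Q\x+\mathbf{d}^{T}\x$: box-constrained concave quadratic minimization, which is NP-hard. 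No yes/no gap analysis, no perturbation bound, and no control of the landscape of $\phi(\x)$ is needed, because the inner problem is trivialized rather than merely regularized. Your steps (4)--(5), which you correctly flag as ``where the real work lies,'' simply vanish under this construction; leaving them open means the proposal is a strategy, not a proof.

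Separately, your fallback route is either circular or starts from a tractable problem. The uncoupled minimax $\min_{\x}\max_{\y}\x^{T}M\y$ over simplices is a matrix game, solvable in polynomial time by linear programming, so adding small strongly convex/concave regularizers to it cannot transfer NP-hardness. The genuinely NP-hard object is \emph{joint} bilinear minimization $\min_{\x,\y}\x^{T}M\y$, which is not of the form \eqref{eq:inn_minimax} until you supply a mechanism forcing the inner maximizer to behave as a minimizer---and that mechanism is precisely the singleton coupling constraint above. If instead you intend to start from a coupled minimax instance ``already known'' to be hard, you are assuming the conclusion. A similar caution applies to the perturbation argument: bounding the value change by $O(\epsilon)$ is easy, but it only helps if the unperturbed instance is both hard and of the required minimax-with-coupling form, which is exactly what needs to be exhibited.
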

\begin{proof}
Consider the problem
\begin{align}\label{eq:prob_hard}
\min\limits_{\mathbf{0} \leq \x \leq \mathbf{1}} 
\left(\max\limits_{\stackrel{ -\mathbf{1} \leq \y \leq \mathbf{1}}{\x-2\y = \bf{0}}} \|\x\|^{2} + \frac{1}{2}\x^{T} Q \y - 4\|\y\|^{2} + \mathbf{d}^{T}\x \right).
\end{align}
where $Q \in \R^{n \times n}$ is a symmetric matrix with $Q \preceq 0$, $\mathbf{d} \in \R^{n}$, and $\mathbf{1}$ is a vector of ones. We can easily verify that problem \eqref{eq:prob_hard} is of the form \eqref{eq:inn_minimax}, and satisfies Assumption \ref{as:feas}. 
Then, using the coupled equality constraint  $\x-2\y = {\bf{0}}$ (which can be also expressed with two inequality constraints, i.e., $\x-2\y \leq {\bf{0}}$ and $-\x+2\y \leq {\bf{0}}$), problem  \eqref{eq:prob_hard} can be written as 
\begin{align*}
\min\limits_{\mathbf{0} \leq \x \leq \mathbf{1}}   \frac{1}{4}\x^{T} Q \x + \mathbf{d}^{T}\x.
\end{align*}
Since $Q \preceq 0$ the above is a difficult constrained concave minimization problem. To be more precise, it is established in \cite{pardalos1991quadratic} that this box-constrained non-convex quadratic problem is NP-hard. The proof is now completed.
\end{proof}

\begin{proposition}\label{pro:np_hard2}
Consider the \eqref{eq:out_minimax} problem with a strongly-convex concave objective $f(\x,\y)$. This problem is NP-hard.
\end{proposition}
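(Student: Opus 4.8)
\textbf{Proof plan for Proposition~\ref{pro:np_hard2}.}
The plan is to mimic the reduction used in the proof of Proposition~\ref{pro:np_hard}, but adapted to the outer-level coupling structure and to the weaker requirement that $f$ be only concave (not strongly concave) in $\y$. The idea is to pick an objective whose inner maximizer $\y^*(\x)$ can be made to equal $\x$ (or an affine image of $\x$), so that the outer coupling constraint $A\x + B\y^*(\x) \le \cb$ collapses to a plain linear constraint on $\x$, and the outer objective $\tilde\phi(\x) = \max_{\y\in\Y} f(\x,\y)$ reduces to a box-constrained nonconvex (concave) quadratic minimization, which is NP-hard by \cite{pardalos1991quadratic, burer2009globally}.

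Concretely, I would take $\X = \Y = [\mathbf 0,\mathbf 1]$ and choose
\begin{align*}
f(\x,\y) = \tfrac12 \x^T Q \x + \mathbf d^T \x + \tfrac{\mu}{2}\|\x\|^2 - \tfrac{\mu}{2}\|\y - \x\|^2,
\end{align*}
with $Q\preceq 0$ symmetric, $\mathbf d\in\R^n$, and $\mu>0$ chosen large enough that the $\x$-part $\tfrac12\x^TQ\x + \tfrac\mu2\|\x\|^2$ is strongly convex (i.e.\ $\mu I + Q \succ 0$). For any fixed $\x\in[\mathbf0,\mathbf1]$, the inner problem $\max_{\y\in[\mathbf0,\mathbf1]} f(\x,\y)$ is solved uniquely by $\y^*(\x) = \x$, since $-\tfrac\mu2\|\y-\x\|^2$ is maximized at $\y=\x$ and $\x$ is feasible; hence $\tilde\phi(\x) = f(\x,\x) = \tfrac12\x^TQ\x + \mathbf d^T\x + \tfrac\mu2\|\x\|^2$. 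Now I would impose the coupling constraint in a way that is automatically satisfied (e.g.\ $A=B=0$, $\cb = \mathbf 0$, or any $A,B,\cb$ with $(A+B)\x\le\cb$ valid on $[\mathbf0,\mathbf1]$), so the outer problem becomes $\min_{\x\in[\mathbf0,\mathbf1]}\ \tfrac12\x^TQ\x + \mathbf d^T\x + \tfrac\mu2\|\x\|^2$. Since the quadratic form $Q + \mu I$ can be made to have an arbitrary number of negative eigenvalues by scaling $Q$ relative to $\mu$ — equivalently, absorbing $\tfrac\mu2\|\x\|^2$ into a new matrix $\tilde Q := Q + \mu I$ which ranges over all symmetric matrices with bounded negative part — this is precisely the NP-hard box-constrained indefinite/concave quadratic program. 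I should double-check that $f$ as written is genuinely strongly-convex in $\x$ (true once $\mu I + Q\succ 0$) and concave in $\y$ (true, it is $-\tfrac\mu2\|\y-\x\|^2$ plus terms independent of $\y$), and that Assumption~\ref{as:feas} holds (continuity and compactness are immediate; feasibility of the outer coupling holds by construction since $\y^*(\x)=\x$ and the constraint is vacuous).

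One subtlety I would be careful about: in the definition \eqref{eq:opt:outer} of a solution to \eqref{eq:out_minimax}, when $\y^*(\x)$ is non-unique the problem is under-specified, so I want the inner maximizer to be \emph{unique} — which the strictly concave term $-\tfrac\mu2\|\y-\x\|^2$ guarantees — so there is no ambiguity in $\tilde\phi$ or in the coupling constraint. The other point is that the NP-hardness results I cite are usually stated for $\min \x^T M \x$ over $[\mathbf0,\mathbf1]^n$ with $M$ indefinite or negative semidefinite; I need the reduction from \textsc{3-SAT} or from max-clique (as in \cite{pardalos1991quadratic}) to land in my parametrized family, i.e.\ $\tilde Q = Q+\mu I$ with $Q\preceq 0$ should be able to realize the hard instances. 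If a pure negative-semidefinite $\tilde Q$ is needed, I can instead drop the $\tfrac\mu2\|\x\|^2$ regularizer from the \emph{effective} objective by not adding it at all — but then $f$ is not strongly convex in $\x$; the statement only asks for \emph{convex} (the proposition says ``strongly-convex concave'', so I do need strong convexity). This tension — needing strong convexity in $\x$ while still reaching negative-semidefinite hard instances — is the main obstacle, and the resolution is exactly the observation above: strong convexity of $f(\x,\cdot\,)$'s $\x$-block is about $Q+\mu I\succ 0$, whereas the hardness lives in the \emph{reduced} objective $\tilde\phi$ whose Hessian $Q+\mu I$ we are free to make indefinite by letting $Q$ have eigenvalues below $-\mu$; wait — that contradicts $Q+\mu I\succ0$. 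So the correct fix is to keep $Q+\mu I\succ 0$ (strong convexity genuinely holds) and note that $\min_{\x\in[\mathbf0,\mathbf1]}\x^T(Q+\mu I)\x + \mathbf d^T\x$ with $Q+\mu I\succ0$ is a \emph{convex} QP, hence easy — so this exact construction fails. I would therefore instead introduce an auxiliary variable or a larger box so that the strongly-convex-in-$\x$ objective, once the inner max is substituted, produces a \emph{non-convex} reduced problem: e.g.\ let $\y$ control a reflection, $f(\x,\y) = \tfrac\mu2\|\x\|^2 + \mathbf d^T\x - \tfrac\mu2\|\y\|^2 + \x^T N \y$ with $\y^*(\x) = \tfrac1\mu N^T\x$ unconstrained-optimal but clipped by the box; choosing the box large enough that clipping is inactive gives $\tilde\phi(\x) = \tfrac\mu2\|\x\|^2 + \mathbf d^T\x + \tfrac1{2\mu}\|N^T\x\|^2$, still convex. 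The genuine resolution, matching the Proposition~\ref{pro:np_hard} template, is to route the non-convexity through the \emph{coupling constraint} $A\x + B\y^*(\x)\le\cb$ rather than through the objective: arrange $\y^*(\x)$ to be an affine function of $\x$ so that this constraint forces $\x$ onto a set (e.g.\ $\{0,1\}^n$ via $\x = \y^*(\x)$ together with $\x\in[\mathbf0,\mathbf1]$ and an extra inequality), reducing $\min \tilde\phi$ to minimizing a strongly convex quadratic over a feasible set encoding an NP-hard combinatorial constraint — this is the step I expect to require the most care, and I would model it closely on how the equality $\x-\y=\mathbf 0$ is exploited in the proof of Proposition~\ref{pro:np_hard}.
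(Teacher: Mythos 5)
Your proposal does not close. You correctly work your way to the right diagnosis --- that with a strongly convex $f(\cdot,\y)$ the reduced objective $\tilde\phi$ stays convex, so the non-convexity must be smuggled in through the outer coupling constraint $A\x+B\y^*(\x)\le\cb$ --- but the concrete mechanism you end on cannot deliver it. If $\y^*(\x)$ is an \emph{affine} function of $\x$, then $A\x+B\y^*(\x)\le\cb$ is an affine inequality in $\x$, so the outer feasible set is a polyhedron intersected with the box, i.e.\ convex; minimizing a strongly convex quadratic over it is a tractable convex QP, and there is no way to encode $\{0,1\}^n$ or any other hard combinatorial set this way. The step you flag as ``requiring the most care'' is precisely the one that fails. (A secondary issue: in your first construction the term $-\tfrac{\mu}{2}\|\y-\x\|^2$ contributes $-\mu I$ to the $\x$-Hessian, so $\nabla^2_{\x\x}f=Q\preceq 0$ and $f$ is not strongly convex in $\x$ even when $Q+\mu I\succ 0$; but you abandon that construction anyway.)

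The paper's proof exploits exactly the feature you rule out: it makes $\y^*(\x)$ deliberately \emph{non-affine}. It takes the inner objective linear in $\y$, namely $f(\x,\y)=M\x^T\x+\sum_{i,j}c_{ij}x_i^2x_j^2+\x^T\y$ with $\Y=\{\|\y\|\le 1\}$, so that $\y^*(\x)=\x/\|\x\|$, and imposes the coupling $\x-\y^*(\x)=0$, which forces $\|\x\|=1$ --- the unit sphere, a non-convex set. The objective is kept strongly convex in $\x$ (by choosing $M$ large on the bounded domain) but \emph{quartic}, so that after restriction to the sphere one is left with $\min_{\|\x\|=1}\sum_{i,j}c_{ij}x_i^2x_j^2$, which is NP-hard. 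Note that the quartic term is essential too: minimizing a strongly convex \emph{quadratic} over the sphere is a trust-region subproblem and is polynomial-time solvable, so even if you had managed to carve out the sphere, your plan of ``a strongly convex quadratic over a hard feasible set'' would still not yield hardness there. To repair your argument you would need both ingredients: a non-affine inner argmax whose image under the coupling constraint is a genuinely non-convex set, and a strongly convex outer objective whose restriction to that set is a known NP-hard problem.
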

\begin{proof}
Consider the problem
\begin{align}\label{eq:prob_hard2}
\min\limits_{\|\x \| \leq 1, \x-\y^{*}(\x)=0} 
\left(\max\limits_{ \|\y\| \leq 1} 
\left\{M \x^{T}\x + \sum\limits_{i,j}^{n} c_{ij}\x_{i}^{2}\x_{j}^{2} +\x^{T}\y  \right\} \right),
\end{align}
where $c_{i,j}$ are constants, and $M>0$ is selected such that the function $g(\x)= M \x^{T}\x + \sum\limits_{i,j}^{n} c_{ij}\x_{i}^{2}\x_{j}^{2}$ is strongly convex in a compact domain (which is a superset of the set $\|\x\| \leq 1$). Therefore, the  objective of problem \eqref{eq:prob_hard2} is strongly convex in $\x$, and concave (linear) in $\y$.
Then, \eqref{eq:prob_hard2} can be equivalently written as
\begin{align}\label{eq:np_hard_prf1}
\min\limits_{\|\x \| \leq 1, \x-\frac{\x}{\|\x\|}=0} 
\left\{ M \x^{T}\x + \sum\limits_{i,j}^{n} c_{ij}\x_{i}^{2}\x_{j}^{2} + \|\x\| \right\} 
\Leftrightarrow &\min\limits_{\|\x \| = 1} 
\left\{\sum\limits_{i,j}^{n} c_{ij}\x_{i}^{2}\x_{j}^{2} \right\} +M+1.
\end{align}
It is a known fact that the problem in the RHS of equation \eqref{eq:np_hard_prf1} is in general NP-hard, which
establishes the NP-hardness of problem \eqref{eq:prob_hard2}.
The proof is now completed. 
\end{proof}

We would like to stress here that the classical min-max problem of the form \eqref{eq:minmax}, where the objective $f(\x,\y)$ is differentiable, strongly convex in $\x$, and strongly concave in $\y$, is not NP-hard. 
 To see this, one needs only to observe that for any $(\x,\y) \in \X \times \Y$, the direction $(\nabla_x f(\x,\y)^T,-\nabla_y f(\x,\y)^T)^T$ defines a separating hyperplane regarding any stationary solution. In particular, for any given stationary solution $(\x^*,\y^*)\in \X \times \Y$ we have
\begin{eqnarray*}
& & \left(\nabla_x f(\x,\y),-\nabla_y f(\x,\y)\right)^T (\x-\x^*,\y-\y^*) \\
& = &  \left(\nabla_x f(\x,\y)^T (\x-\x^*) - (\nabla_y f(\x,\y)\right)^T (\y-\y^*) \\
& \le & f(\x^*, \y) - f(\x, \y) - [f(\x,\y^*) - f(\x,\y)] \\
& = & f(\x^*, \y) - f(\x,\y^*) \\
&\le& 0, 
\end{eqnarray*}
where the first inequality in the above derivation is due to the gradient inequality of the convex-concave property, and the second inequality is due to the stationary property:
\[
f(\x^*, \y) \le f(\x^*,\y^*) \le f(\x,\y^*),
\]
for all $(\x,\y) \in \X\times \Y$. 
Using the ellipsoid method (see e.g.~\cite{grotschel2012geometric}), this implies that finding a stationary solution can be done in polynomial-time. 
In terms of finding stationary solutions effectively in practice without resorting to the ellipsoid method, there are several works in the literature in which convergence is shown for the above problem to approximate global solutions (i.e., saddle points) in a finite number of iterations for any given precision; see for instance  \cite{mokhtari2019unified, yan2020optimal}.
Therefore, it is the introduction of the linear coupling between the inner and the outer variables in the constraints (of one of the two problems) that {renders} the problem intractable, even under strong assumptions. To illustrate this point more clearly consider the following example.
\begin{example}
Consider the following  simple linearly constrained min-max problem
    \begin{align*}
    \min\limits_{x \in [0,1]} 
    \left(\max\limits_{\stackrel{ y \in [-1,1]}{x-2y = 0}} f(x,y):=x^{2} - \frac{1}{2}xy - 4y^{2} \right).
    \end{align*}
    Notice that the objective $f(x,y)$ is strongly convex in $x$ and strongly concave in $y$. Then, it is easy to see that inner function takes the form 
    $$\phi(x) = \max\limits_{y \in [-1,1]: x-2y=0} x^{2}-\frac{1}{2}xy -4y^{2} = x^{2}-\frac{1}{2}x \frac{1}{2}x -4\frac{1}{4}x^{2} =-\frac{1}{4}x^{2}.$$ Clearly, $\phi(\x)$ is non-convex; in fact it is strongly concave. Therefore we see that while the objective $f(\x,\y)$ is a simple strongly convex strongly concave function, the resulting value function $\phi(\x) = \max_{\y \in \Y, A\x+B\y\leq \bf{c}} f(\x,\y)$ is non-convex. 
\end{example}

Despite the fact that the above results show that the linearly constrained minimax problems are NP-hard in general, it is still desirable to design efficient algorithms for them (for computing certain stationary solutions).
As we have noted in Sec. \ref{sec:related}, standard algorithms for minimax problems do not apply, and  generic methods for bi-level optimization can be  inefficient. 
Our idea is to explore the structure of these problems  from their dual perspective, and identify equivalent problems that are much easier to optimize. Since our focus is mainly given to the inner-level coupled problems, in the next section we develop a duality theory for problem \eqref{eq:minmax-coupled}.

\section{Duality Theory for \eqref{eq:inn_minimax}}\label{sec:problem_form}

To begin with, let us  define the dual problems. 
\begin{definition}[Dual Problems]
Consider the Lagrangian of problem \eqref{eq:inn_minimax}:
\begin{align}\label{eq:L}
L(\x,\y,\lb)=f(\x,\y) - \lb^{T}(A\x+B\y-\cb),
\end{align}
where $\lb \geq {\bf 0}$ is the vector of Lagrangian multipliers.
Then, define the following three dual problems:
\begin{align}
&\min\limits_{\xx} \left(\min\limits_{\lb \geq \bf{0}} \max\limits_{\yy}  L(\x,\y,\lb)\right) \tag{D1},\label{eq:D1}\\
&\min\limits_{\lb \geq \bf{0}} \left(\min\limits_{\xx} \max\limits_{\yy}  L(\x,\y,\lb)\right) \tag{D2},\label{eq:D2}\\
&\min\limits_{\lb \geq \bf{0}, \xx} \left( \max\limits_{\yy}  L(\x,\y,\lb)\right) \tag{D3}.\label{eq:D3}
\end{align} 
\end{definition}

Note that due to the special structure of our problem, we can potentially have a few more possible  `dual' problems.  However, our analysis will  focus on three of them listed above, since they will be instrumental in our subsequent analysis and development of stationary solution concept. Below, we provide the weak duality theorem of problem \eqref{eq:inn_minimax}. 
\begin{theorem}[Weak duality]\label{pro:weak_dual}
Under Assumption \ref{as:feas}, we have: 
\begin{align*}
v \left( \mbox{\rm \ref{eq:inn_minimax}} \right) \leq 
v \left( {\rm \ref{eq:D1}} \right) = 
v \left( {\rm \ref{eq:D2}} \right)= 
v \left( {\rm \ref{eq:D3}} \right).
\end{align*}
\end{theorem}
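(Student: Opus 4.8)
The plan is to prove the chain of (in)equalities in two parts: first the weak-duality inequality $v(\text{\ref{eq:inn_minimax}}) \le v(\text{\ref{eq:D1}})$, and then the three equalities $v(\text{\ref{eq:D1}}) = v(\text{\ref{eq:D2}}) = v(\text{\ref{eq:D3}})$. For the inequality, I would fix an arbitrary $\x \in \X$ and compare the inner value of the primal, $\phi(\x) = \max_{\y \in \Y,\, A\x+B\y \le \cb} f(\x,\y)$, with the inner value of \ref{eq:D1}, namely $\psi(\x) := \min_{\lb \ge \mathbf{0}} \max_{\y \in \Y} L(\x,\y,\lb)$. The key observation is that for \emph{any} fixed $\lb \ge \mathbf{0}$ and any $\y \in \Y$ that is feasible for the primal inner problem (i.e.\ $A\x+B\y-\cb \le \mathbf{0}$), we have $-\lb^T(A\x+B\y-\cb) \ge 0$, hence $f(\x,\y) \le L(\x,\y,\lb) \le \max_{\y' \in \Y} L(\x,\y',\lb)$. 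Taking the max over feasible $\y$ on the left gives $\phi(\x) \le \max_{\y' \in \Y} L(\x,\y',\lb)$ for every $\lb \ge \mathbf{0}$; minimizing over $\lb$ yields $\phi(\x) \le \psi(\x)$. (Here Assumption~\ref{as:feas}.\ref{ass:feasibility} guarantees the primal inner feasible set is nonempty so $\phi(\x)$ is a genuine maximum, not $-\infty$.) Minimizing over $\x \in \X$ then gives $v(\text{\ref{eq:inn_minimax}}) \le v(\text{\ref{eq:D1}})$.

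For the equalities $v(\text{\ref{eq:D1}}) = v(\text{\ref{eq:D2}}) = v(\text{\ref{eq:D3}})$, I would argue that all three are different ways of writing the same quantity $\inf_{\x \in \X,\, \lb \ge \mathbf{0}} \, g(\x,\lb)$ where $g(\x,\lb) := \max_{\y \in \Y} L(\x,\y,\lb)$. Indeed $v(\text{\ref{eq:D3}})$ is by definition $\inf_{(\x,\lb) \in \X \times \mathbb{R}^k_{\ge 0}} g(\x,\lb)$ over the product set; $v(\text{\ref{eq:D1}}) = \inf_{\x}\inf_{\lb} g$ and $v(\text{\ref{eq:D2}}) = \inf_{\lb}\inf_{\x} g$ are the two iterated infima. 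Since infimum over a product set equals either iterated infimum (this is elementary and requires no convexity, only that we are taking infima — \emph{not} a min-max swap), all three coincide. The only subtlety is making sure the inner $\max_{\y \in \Y}$ defining $g$ is attained and finite: this follows from Assumption~\ref{as:feas}.\ref{ass:cont} (continuity of $f$, hence of $L$ in $\y$ for fixed $\x,\lb$) and \ref{as:feas}.\ref{ass:compact} (compactness of $\Y$) via Weierstrass, so $g(\x,\lb) \in \mathbb{R}$ is well-defined everywhere on $\X \times \mathbb{R}^k_{\ge 0}$, and the iterated/product infimum manipulations are legitimate (possibly with value $-\infty$ a priori, though boundedness considerations rule that out).

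The main obstacle — really the only place one must be careful — is the direction and validity of each step: one must resist the temptation to swap $\min_\lb$ and $\max_\y$ (which would require a minimax theorem and convexity/concavity in $(\y,\lb)$, true here since $L$ is affine in $\lb$ and one could invoke Sion, but it is \emph{not needed} for this theorem and would only give $v(\text{\ref{eq:D1}}) = v(\text{\ref{eq:D2}})$ anyway). The equalities among D1, D2, D3 are pure bookkeeping about nested infima over a product domain; the inequality with the primal is the genuine content and rests entirely on the sign of the Lagrangian penalty term on the primal-feasible set. I would also remark that strong duality (equality with the primal) is \emph{not} claimed here — consistent with the NP-hardness results — and is deferred; the theorem asserts only $\le$.
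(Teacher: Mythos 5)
Your proof is correct, and both halves land on the same essential facts as the paper: the equalities $v({\rm D1})=v({\rm D2})=v({\rm D3})$ are pure commutation of infima over the product set $\X\times\R^{k}_{+}$ (exactly as in the paper), and the inequality rests on the nonnegativity of $-\lb^{T}(A\x+B\y-\cb)$ on the primal-feasible set. The one place where your route genuinely differs is the organization of the inequality step: the paper first applies the max--min inequality to the pair $(\y,\lb)$, obtaining $\max_{\yy}\min_{\lb\ge\mathbf{0}}L \le \min_{\lb\ge\mathbf{0}}\max_{\yy}L$ pointwise in $\x$, and then separately identifies $\max_{\yy}\min_{\lb\ge\mathbf{0}}L(\x,\y,\lb)$ with the primal inner value $\phi(\x)$ by the case analysis $\min_{\lb\ge\mathbf{0}}L=-\infty$ when some constraint is violated and $=f(\x,\y)$ when it is not. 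You instead give the direct one-step weak-duality bound $f(\x,\y)\le L(\x,\y,\lb)\le\max_{\y'\in\Y}L(\x,\y',\lb)$ for every feasible $\y$ and every $\lb\ge\mathbf{0}$, then take $\max$ over feasible $\y$ and $\inf$ over $\lb$. Your version is slightly more economical and avoids the $-\infty$ bookkeeping entirely; the paper's version has the side benefit of exhibiting the identity $\phi(\x)=\max_{\yy}\min_{\lb\ge\mathbf{0}}L(\x,\y,\lb)$ explicitly, which is the Lagrangian primal representation reused in the strong-duality argument of Theorem~\ref{pro:strong_dual}. Your closing remarks --- that no min/max swap in $(\y,\lb)$ is needed, and that only $\le$ (not equality) with the primal is claimed --- are accurate and consistent with the paper.
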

\begin{proof}
First, notice that we can exchange the order of the min operators:
$$\hspace{-1mm}
\min\limits_{\xx} \left(\min\limits_{\lb \geq {\bf{0}}} \max\limits_{\yy}  L(\x,\y,\lb) \right) = \min\limits_{\lb \geq {\bf{0}}} \left(\min\limits_{\xx} \max\limits_{\yy}  L(\x,\y,\lb \right)
=\min\limits_{\lb \geq {\bf{0}}, \xx} \left( \max\limits_{\yy}  L(\x,\y,\lb)\right).\hspace{-2mm}$$ So
$v \left({\rm \ref{eq:D1}} \right) = v \left({\rm \ref{eq:D2}} \right) = v \left( {\rm \ref{eq:D3}} \right)$.
Moreover, using the max-min inequality we have that
\begin{align*}
\max\limits_{\yy} \min\limits_{\lb \geq {\bf{0}}} L(\x,\y,\lb) 
&\leq 
\min\limits_{\lb \geq {\bf{0}}} \max\limits_{\yy}  L(\x,\y,\lb), \; \forall \x \in \X.
\end{align*} 
Minimizing the problems on both sides over the set $\mathcal{X}$, we can obtain 
\begin{align}\label{eq:prf_weak1}
\min\limits_{\xx} \left(\max\limits_{\yy} \min\limits_{\lb \geq {\bf{0}}} L(\x,\y,\lb) \right)
&\leq 
\min\limits_{\xx} \left(\min\limits_{\lb \geq {\bf{0}}} \max\limits_{\yy}  L(\x,\y,\lb) \right).
\end{align} 
Moreover, consider an arbitrary $\xx$, and observe that if there exists an index $i$ such that $\left[A\x + B\y -\cb \right]_{i} >0$, then we have:
\begin{align}\label{eq:prf_weak2}
\min\limits_{\lb \geq {\bf{0}}} L(\x,\y,\lb) = f(\x,\y) + \min\limits_{\lb \geq {\bf{0}}} \left\{ -\lb^{T}\left(A\x + B\y -\cb \right) \right\} = - \infty.
\end{align}
On the other hand, if it holds that $A\x + B\y \leq \cb $, then:
\begin{align}\label{eq:prf_weak3}
\min\limits_{\lb \geq {\bf{0}}} L(\x,\y,\lb) = f(\x,\y) > \underline{f}, \quad \forall\; \x\in \mathcal{X}, \y\in \mathcal{Y}.
\end{align}
As a result, \eqref{eq:prf_weak2} and \eqref{eq:prf_weak3} imply that: 
\begin{align*}
\max\limits_{\yy} \left(\min\limits_{\lb \geq {\bf{0}}} L(\x,\y,\lb) \right) &=
\max\limits_{\yy,  A\x+B\y \leq {\bf{c}} } \left(\min\limits_{\lb \geq {\bf{0}} } L(\x,\y,\lb) \right)=\max\limits_{\yy,  A\x+B\y \leq {\bf{c}} }  f(\x,\y). 
\end{align*}
Minimizing both sides of the above equality over $\X$ we get: 
\begin{align}\label{eq:prf_weak4}
\min\limits_{\x \in \X} 
\left( \max\limits_{\yy, A\x+B\y \leq {\bf{c}}} f(\x,\y) \right)
= \min\limits_{\xx} \left(\max\limits_{\yy} \min\limits_{\lb \geq {\bf{0}}} L(\x,\y,\lb) \right).
\end{align} 
Finally, combining \eqref{eq:prf_weak1} and \eqref{eq:prf_weak4}, we conclude that:
$$
\min\limits_{\x \in \X} 
\left( \max\limits_{\yy, A\x+B\y \leq {\bf{c}}} f(\x,\y) \right)
\leq \min\limits_{\xx} \left(\min\limits_{\lb \geq {\bf{0}}} \max\limits_{\yy}  L(\x,\y,\lb) \right).
$$
The proof is now completed.
\end{proof}
 
Next, we develop the strong duality of problem \eqref{eq:inn_minimax}. That is, we identify the conditions under which strong duality holds, and establish the equivalence of the solutions of the primal problem with those of the dual problems. To begin with, we impose the following assumptions:
\begin{assumption}\label{as:strong_dual}
Assume that the following holds:
\begin{enumerate}
\item {$f(\x,\y)$ is strongly concave in $\y$,  for every $\x \in \X$, with modulus $\mu_y$. \label{ass:str_cnc_y}} 

\item  For every $\xx$ there exists  $\y \in {\rm relint}(\Y)$ such that $(A\x + B\y - \cb) \leq {\bf 0}$. \label{ass:slater}

\end{enumerate}
\end{assumption}
Based on the above assumptions, we have the following duality theorem.
\begin{theorem}[Duality Theorem]\label{pro:strong_dual}
Under Assumptions \ref{as:feas} and \ref{as:strong_dual}, strong duality holds, that is, we have the following relations:
\begin{align*}
v \left( \mbox{\rm \ref{eq:inn_minimax}} \right)= 
v \left( {\rm \ref{eq:D1}} \right)= 
v \left( {\rm \ref{eq:D2}} \right)= 
v \left( {\rm \ref{eq:D3}} \right).
\end{align*}

Also, for the solutions of the dual problems we have that: 

\begin{enumerate}

\item  There exists a $\lb^{\ast} \geq {\bf 0}$ such that $(\x^*,\y^*,\lb^*)$ is a solution of \eqref{eq:D2} if an only if $(\x^*,\y^*)$ is a solution of \eqref{eq:inn_minimax}.   

That is, the tuple $(\x^*,\y^*,\lb^*)$ satisfies the following condition: {\it 1)} $(\x^*,\y^*)$ is a solution (i.e., a minimax point) of the inner min-max problem with $\lb=\lb^{\ast}$, i.e., $
       (\x^*,\y^*) \in \arg\min\limits_{\xx} \arg\max\limits_{\yy}  L(\x,\y,\lb^{\ast});$ {\it 2)} $\lb^{\ast}$ is a global minimizer of the function $G(\lb):= \min\limits_{\xx} \max\limits_{\yy}  L(\x,\y,\lb)$, $\lb^{\ast} \in \arg\min\limits_{\lb \geq {\bf 0}} G(\lb).$

\item  There exists a $\lb^{\ast} \geq {\bf 0}$ such that $(\x^*,\y^*,\lb^*)$ is a solution of \eqref{eq:D3} if an only if $(\x^*,\y^*)$ is a solution of \eqref{eq:inn_minimax}.  

That is:
      $\left((\x^*,\lb^*),\y^*\right) \in  \arg\min\limits_{\lb \geq \bf{0}, \xx}  \arg\max\limits_{\yy}  L(\x,\y,\lb).$
\end{enumerate}

\end{theorem}
\begin{proof}

We divide the proof into two parts. 

\noindent {\bf Objective equivalence.} We already established in Theorem \ref{pro:weak_dual} that 
$v \left( {\rm \ref{eq:D1}} \right) = v \left( {\rm \ref{eq:D2}} \right)= v \left( {\rm \ref{eq:D3}} \right)$.
  Note that strong duality holds for problem $\max\limits_{\y \in \Y, A\x+B\y\leq\bf{c}} f(\x,\y)$, for any fixed $\x$, as a consequence of Assumption  \ref{as:strong_dual} (see, e.g., \cite[Prop. 5.3.1]{bertsekas2009convex}). Then,
  {\small
\begin{align*}
&\max\limits_{\y \in \Y, A\x+B\y\leq\bf{c}} f(\x,\y) 
= \max\limits_{\yy} \min\limits_{\lb \geq \bf{0}} L(\x,\y,\lb) 
=\min\limits_{\lb \geq \bf{0}} \max\limits_{\yy}  L(\x,\y,\lb), \; \forall \xx
\\
&\min\limits_{\xx} \left( \max\limits_{\y \in \Y, A\x+B\y\leq\bf{c}} f(\x,\y)\right)
=\min\limits_{\xx} \left(\max\limits_{\yy} \min\limits_{\lb \geq \bf{0}} L(\x,\y,\lb) \right)
=\min\limits_{\xx} \left(\min\limits_{\lb \geq \bf{0}} \max\limits_{\yy}  L(\x,\y,\lb) \right).
\end{align*}
}
That is, we have $v \left( \mbox{\rm \ref{eq:inn_minimax}} \right) = v \left( {\rm \ref{eq:D1}} \right) = v \left( {\rm \ref{eq:D2}} \right)= v \left( {\rm \ref{eq:D3}} \right)$.

\noindent{\bf Solution equivalence.} Next we  show the second part of the claim. 

First, we show that the solutions of \eqref{eq:inn_minimax} and \eqref{eq:D1} are equivalent.   Consider an arbitrary $\xx$, and let us define the following problems  
\begin{align*} 
&(A) : \quad g_{A}(\x):= \max\limits_{\yy} \min\limits_{\lb \geq 0} L(\x,\y,\lb)\\
&(B) : \quad g_{B}(\x):= \min\limits_{\lb \geq 0} \max\limits_{\yy}  L(\x,\y,\lb)  \\
&(C) : \quad g_{C}(\x):= \max\limits_{\y \in \mathcal{Y},A\x+B\y \leq\cb} f(\x,\y).
\end{align*}

Let us denote  $\left(\y^*_A,\lb^*_A\right)$ as a solution of (A), and with $(\y^*_B,\lb^*_B)$ a solution of (B). We will show below that if $(\y^*_B,\lb^*_B)$ is a solution of (B), then $\y^*_B$ is a solution of (C). Conversely, we will show that if $\y^*$ is a solution of (C), then there exists a $\lb^*$ such that $(\y^*,\lb^*)$ is a solution of (B). 

To begin with,  
 using  \cite[Prop. 5.3.1]{bertsekas2009convex}, we see that Assumption  \ref{as:strong_dual} implies that strong duality holds for problem (C).
From \cite[Prop. 5.3.2]{bertsekas2009convex} we know that when strong duality holds, $(\y^*_A, \lb^*_B)$  satisfies the following conditions: $\lb^*_B \geq \bf{0}$, $\y^*_A \in \Y$ and $A\x + B\y^*_A - \cb \leq \bf{0}$, and 
\begin{align}\label{eq:dual_prf_1}
\y^*_A = \arg\max\limits_{\yy} L(\x,\y,\lb^*_B).
\end{align}
Moreover, observe that $(\y^*_B,\lb^*_B)$ is a solution of (B), and thus
\begin{align}\label{eq:dual_prf_2}
\y^*_B = 
\arg\max\limits_{\yy} L(\x,\y,\lb^*_B) =
\arg\max\limits_{\yy} \left\{ f(\x,\y) - (\lb^*_{B})^{T}(A\x+B\y-\cb)\right\}.
\end{align}
Then, combining  \eqref{eq:dual_prf_1} and \eqref{eq:dual_prf_2}, and considering the strong concavity of $L(\x,\y,\lb^*_B)$ w.r.t $\y$, we can infer that $\y^*_A=\y^*_B$. Therefore, $\y^*_B$ is a solution of problem (A), and as a result a solution of (C). 
 
Conversely, we will show that if $\y^*$ is a solution of (C) then there exists a $\lb^*$ such that $(\y^*,\lb^*)$ is a solution of (B). Indeed, under strong duality, for the unique (due to strong concavity) solution $\y^*$ of (C) there exist Lagrange multipliers $\lb^* \geq {\bf 0}$ such that $(\y^*, \lb^*)$ is a saddle point of the Lagrangian $L(\x,\y,\lb)$ \cite[Prop. 5.3.2]{bertsekas2009convex}. We know that the set of saddle points of $L(\x,\y,\lb)$ is the intersection of the set of minimax points $(\y^*_A, \lb^*_A)$ with the set of maximin points $(\y^*_B, \lb^*_B)$, and thus it is implied that $(\lb^*, \y^*)$ is a solution of (B). 

We have shown for some arbitrary $\xx$ that  $\y^*_{\x}$ is a solution of (C) if and only $(\y^*_{\x}, \lb^*_{\x})$ is a solution of (B), for some $\lb^*_{\x} \geq {\bf{0}}$; here we used the notation $\y^*_{\x}, \lb^*_{\x}$ to make the dependence to that arbitrary $\x$ clear.
Finally, note that the strong duality holds for problem (C)  for any fixed $\xx$. This implies that the value of the problems (A), (B), and (C) is the same at every $\xx$, that is 
$$g_{C}(\x) = g_{A}(\x) = g_{B}(\x), \; \forall \xx.$$ 
Therefore, $g_{C}, g_{A}, g_{B}$ have the same set of global minima $\X^*$. Then, at any given $\x^* \in \X^*$, $\y^*_{\x^*}$ is a solution of (C) if and only if $(\y^*_{\x^*}, \lb^*_{\x^*})$ is a solution of (B), for some $\lb^*_{\x^*} \geq {\bf{0}}$. 
In conclusion, a point $(\x^*, \y^*_{\x^*})$ is a solution of \eqref{eq:minmax-coupled} if and only if $(\x^*, \y^*_{\x^*}, \lb^*_{\x^*})$ is a solution of \eqref{eq:D1}.
The claim is proved.

Next, we show that the solutions of \eqref{eq:inn_minimax} and \eqref{eq:D2}/\eqref{eq:D3} are equivalent. Note that the solution sets of the problems \eqref{eq:D1}, \eqref{eq:D2} and \eqref{eq:D3} are equivalent, since the order of the min operators can be exchanged. That is, a solution $(\x^*,\y^*,\lb^*)$ of \eqref{eq:D2} is also a solution of \eqref{eq:D1}, and vice versa; the similar result holds true between \eqref{eq:D3} and \eqref{eq:D1}.
In addition, using the (solution) equivalence between problems \eqref{eq:inn_minimax} and \eqref{eq:D1} established above, we can conclude that 
a point $(\x^*,\y^*,\lb^*)$ is a solution of \eqref{eq:D2} if an only if $(\x^*,\y^*)$ is a solution of \eqref{eq:inn_minimax}. The same result also holds for \eqref{eq:D3}.
The proof is now completed.
\end{proof}

 In the above proof, note that despite Assumption \ref{as:strong_dual}, which ensures strong duality, the solution sets of the problems (A) and (B) (denoted with $S_{A}, S_{B}$, respectively) are not the same. Nonetheless, the original problem \eqref{eq:minmax-coupled} and the dual one \eqref{eq:D2} are equivalent (in the sense mentioned in Theorem \ref{pro:strong_dual}). This is due to the fact that $S_B \subseteq S_A$, which implies that every solution of $B$ is a saddle point (i.e., the points in the set $S_{saddle}= S_B \cap S_A$) of the Lagrangian $L$ and vice versa.
Below we provide a relevant example.

\begin{example}\label{ex:count_ex}
Consider 
 a problem of the form \eqref{eq:minmax-coupled}, where the inner task takes the form
$\max_{y \in [0,2], y \leq 1} -y^{2}+2y$;
 the exact form of the outer task is irrelevant for the context of this example.
Then, the primal (A) and dual (B) problems  (of the inner task) are  
$$p^{\ast}=\max_{y \in [0,2]} \min_{\lambda \geq 0} \left\{-y^{2}+2y - \lambda (y-1)\right\}$$
$$d^{\ast} = \min_{\lambda \geq 0} \max_{y \in [0,2]} \left\{-y^{2}+2y - \lambda (y-1)\right\}.$$ 
 Note that Assumption \ref{as:strong_dual} is satisfied in this example, and thus $p^{\ast}=d^{\ast}$ (i.e., strong duality). However, it can be easily shown that the solutions set of the primal problem is $S_A = \{(1,\lambda): \lambda \geq 0 \}$, while the respective set of the dual problem is different and consists only of one point, namely $S_B = \{(1,0)\} \subseteq S_{A}$. As a result, there is a unique saddle point, i.e., $S_{saddle} = S_B \cap S_A = S_B = \{(1,0)\}$.
\end{example}

\section{First-order stationarity conditions}\label{sec:appr}

In Sec. \ref{sec:coupled} we established that finding the globally optimal solutions of problem \eqref{eq:inn_minimax} is NP-hard in general. It is then useful to identify some high-quality solution concepts that can be computed efficiently.  In this section, we propose to leverage the duality theory developed in the previous section to identify such a class of first-order stationary solutions. More specifically, we will define first-order stationary solutions based on the  dual problem \eqref{eq:D2}. Note that one can directly work with the problem $\phi(\x) = \max_{\y \in \Y, A\x+B\y\leq\bf{c}} f(\x,\y)$, but this is challenging since $\phi(\x)$ is neither smooth nor convex. 

Before we proceed let us make a few additional  assumptions:
\begin{assumption}\label{as:f_str_cvx_x}
We impose the following assumptions
\begin{enumerate}

\item \label{ass:diff} The function $f(\x,\y):\R^{n} \times \R^{m} \to \R$ is differentiable w.r.t. both $\x$ and $\y$.

\item \label{ass:f_str_cvx_x} The function $f(\x,\y)$ is strongly convex in $\x$, for every $\y \in \Y$, with modulus $\mu_x$.

\item \label{ass:Lipschitz} The function $f$ has Lipschitz continuous gradients, i.e.,
\begin{align*}
&\left\|\gx f(\x_1,\y_1) - \gx f(\x_2,\y_2)\right\| \leq L_{x} \left\|\begin{bmatrix} \x_1 \\  \y_1 \end{bmatrix} - \begin{bmatrix} \x_2 \\  \y_2 \end{bmatrix} \right\|, \forall \x_1,\x_{2} \in \X, \y_1,\y_2 \in \Y.\\
&\left\|\gy f(\x_1,\y_1) - \gy f(\x_2,\y_2)\right\| \leq L_{y} \left\|\begin{bmatrix} \x_1 \\ \y_1 \end{bmatrix} - \begin{bmatrix} \x_2 \\  \y_2 \end{bmatrix} \right\|,
\forall \x_1,\x_{2} \in \X, \y_1,\y_2 \in \Y.
\end{align*}
\end{enumerate}
\end{assumption}


 Next, let us define the following quantities:

\begin{subequations}
\begin{align}
   H(\x,\lb) &:= \max_{\y \in \Y} L(\x,\y,\lb) \label{eq:H}\\
    G(\lb) & := \min\limits_{\xx}  \max\limits_{\yy}  L(\x,\y,\lb) \label{eq:def:G}\\
    \yo(\x,\lb) & := \arg\max\limits_{\yy} L(\x,\y,\lb), \;\;
    \xo(\lb) := \arg\min\limits_{\xx}  H(\x,\lb). \label{eq:argminmax}
    \end{align}
\end{subequations}
Using these new notations, problem \eqref{eq:D2} can be decomposed into the following two problems:
\begin{align}
  \min\limits_{\xx} H(\x,\lb) & : = \min_{\x\in \mathcal{X}}\left(\max_{\y\in \mathcal{Y}}\; L(\x,\y,\lb)\right)  
  \quad  \mbox{(Inner-Level Problem)} \label{eq:D2_inn}\\
  \min\limits_{\lb \geq \bf{0}} G(\lb) &:= \min\limits_{\lb \geq \bf{0}} \left( \min\limits_{\xx} H(\x,\lb) \right)\quad \quad  \; \mbox{(Outer-Level Problem)}. \label{eq:D2_out}
\end{align}

Note that the assumption that $f$ is strongly-convex in $\x$ provides us with a number of useful properties which facilitate algorithm design. 
Specifically, it implies that the function $G(\lb)$ defined above, which is the objective of the dual problem \eqref{eq:D2}, is differentiable. However, notice that $G(\lb)$ is not necessarily convex and therefore any reasonable notion of stationary conditions for (D2) will involve the gradient and the stationary points of $G(\lb)$. Therefore, below we derive the formula of the gradient of $G(\lb)$.
\begin{lemma}\label{lem:inminmax_grad}
Suppose that Assumptions \ref{as:feas}, \ref{as:strong_dual}, and \ref{as:f_str_cvx_x} hold. Then $G(\lb)$ is differentiable, and its gradient is given by:
$$\g G(\lb) = -A\xo\left(\lb\right) - B\yo \left(\xo\left(\lb\right),\lb \right) + \cb,$$ 
where $\xo\left(\lb\right),\yo\left(\x,\lb\right)$ are defined in  \eqref{eq:argminmax}.
\end{lemma}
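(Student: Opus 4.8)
The plan is to establish differentiability of $G(\lb)$ via Danskin's theorem, after first verifying that the inner problem $H(\x,\lb) = \max_{\y\in\Y} L(\x,\y,\lb)$ and the resulting outer problem $\min_{\x\in\X} H(\x,\lb)$ each have a \emph{unique} optimal solution depending continuously on $\lb$. First I would fix $\lb \geq {\bf 0}$ and observe that $L(\x,\y,\lb) = f(\x,\y) - \lb^T(A\x+B\y-\cb)$ is, for fixed $\x$, strongly concave in $\y$ with modulus $\mu_y$ by Assumption \ref{as:strong_dual}.\ref{ass:str_cnc_y} (the linear term $-\lb^TB\y$ does not affect the modulus), so $\yo(\x,\lb)$ is well-defined and single-valued. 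By the same token, for fixed $\lb$, the map $\x \mapsto H(\x,\lb)$ is strongly convex in $\x$: indeed $L$ is strongly convex in $\x$ with modulus $\mu_x$ by Assumption \ref{as:f_str_cvx_x}.\ref{ass:f_str_cvx_x} (again the linear term $-\lb^TA\x$ is harmless), and a pointwise maximum over $\y$ of functions that are $\mu_x$-strongly convex in $\x$ is itself $\mu_x$-strongly convex; hence $\xo(\lb)$ is well-defined and single-valued as well.

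Next I would set up the two-level application of Danskin's theorem. Applying Danskin's theorem to the inner max (using Assumption \ref{as:f_str_cvx_x}.\ref{ass:diff} for differentiability of $L$ in $\x$, Assumption \ref{as:feas}.\ref{ass:compact} for compactness of $\Y$, and uniqueness of $\yo(\x,\lb)$), we get that $H(\x,\lb)$ is differentiable in $\x$ with $\gx H(\x,\lb) = \gx L(\x,\yo(\x,\lb),\lb) = \gx f(\x,\yo(\x,\lb)) - A^T\lb$. Similarly, differentiating $L$ with respect to $\lb$ and applying Danskin once more, $H(\x,\lb)$ is differentiable in $\lb$ with $\gl H(\x,\lb) = -\big(A\x + B\yo(\x,\lb) - \cb\big)$. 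Now $G(\lb) = \min_{\x\in\X} H(\x,\lb)$, which is a \emph{minimization}, so I would apply the minimization form of Danskin's theorem: since $\X$ is compact (Assumption \ref{as:feas}.\ref{ass:compact}), $H$ is continuous, $\xo(\lb)$ is the unique minimizer, and $\gl H(\x,\lb)$ is jointly continuous in $(\x,\lb)$, we conclude $G$ is differentiable with
\begin{equation*}
\g G(\lb) = \gl H(\x,\lb)\big|_{\x=\xo(\lb)} = -A\xo(\lb) - B\yo(\xo(\lb),\lb) + \cb,
\end{equation*}
which is the claimed formula.

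The main obstacle is checking the hypotheses of Danskin's theorem cleanly at the outer level, specifically the joint continuity of $(\x,\lb)\mapsto \gl H(\x,\lb)$ and of the argmin/argmax maps. For this I would invoke Berge's maximum theorem (as the paper already does in its remark after Assumption \ref{as:feas}): strong concavity in $\y$ makes $\yo(\x,\lb)$ a continuous single-valued function of $(\x,\lb)$ on $\X\times\{\lb\geq 0\}$, hence $\gl H(\x,\lb) = -(A\x+B\yo(\x,\lb)-\cb)$ is continuous; then strong convexity in $\x$ makes $\xo(\lb)$ continuous, so the composition $\yo(\xo(\lb),\lb)$ is continuous and the gradient formula defines a continuous function of $\lb$. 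A minor subtlety worth a sentence is that the Lipschitz-gradient Assumption \ref{as:f_str_cvx_x}.\ref{ass:Lipschitz}, while not strictly needed for differentiability once Danskin applies, guarantees that $\yo$ and $\xo$ are in fact Lipschitz (standard perturbation bounds for strongly convex/concave problems), so $\g G$ is Lipschitz — a fact the subsequent algorithmic analysis will use, and which is presumably the reason the assumption is invoked in the lemma's hypotheses.
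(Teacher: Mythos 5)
Your proposal is correct and follows essentially the same route as the paper's proof: two applications of Danskin's theorem, first to the inner maximization to obtain $\g H(\x,\lb)$ (with uniqueness of $\yo(\x,\lb)$ from strong concavity in $\y$), and then to $G(\lb)=\min_{\x\in\X}H(\x,\lb)$ after establishing that $H(\cdot,\lb)$ is $\mu_x$-strongly convex so that $\xo(\lb)$ is unique. The only cosmetic differences are that the paper verifies the strong convexity of $H$ in $\x$ by an explicit convex-combination computation rather than citing the "pointwise max of strongly convex functions" fact, and your added remarks on Berge's theorem and the role of the Lipschitz assumption are consistent with the paper's surrounding discussion.
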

\begin{proof}
See Appendix \ref{app:solution}.
\end{proof}
 Now we are ready to provide definitions of (exact and approximate) stationary solutions, based on the dual problem \eqref{eq:D2}.

\begin{definition}[Stationary Solutions]\label{def:D2_stat}
A solution $\left(\xo,\yo,\lo\right)$ is an exact stationary solution of \eqref{eq:D2} if the following holds:
\begin{itemize}
    \item $(\xo,\yo)= \left( \xo\left(\lo\right),\yo\left(\xo,\lo\right) \right)$;
    \item $\lo$ is a stationary point of $G(\lb)$, that is
    $\left\langle \g G\left(\lo\right), \lb-\lo\right\rangle \geq 0, \quad \forall \lb \geq 0$.
\end{itemize}
\end{definition}
We will later show that the stationarity conditions for $G(\lb)$ imply the complementarity condition $\lo^{T} \g G\left(\lo\right)  =0$ with $\lb \geq {\bf 0}$. Next, we provide the definition of the approximate stationary solutions.
\begin{definition}[$(\epsilon, \delta)$-approximate Stationary Solution]\label{def:D2_approx_stat}
Let us define the following quantities:
\begin{align*}
Q(\lb) &:= \frac{1}{\alpha} \left(\lb - \text{\rm proj}_{\R^{k}_{+}} \left(\lb - \alpha \g G(\lb) \right) \right),\\
d(\x,\y, \lb) & := \|\xo(\lb) - \x \|^2 + \|\yo(\xo(\lb),\lb) - \y \|^2,
\end{align*}
where $\alpha>0$ is a constant independent of $\epsilon$ and $\delta$.
Then, a point $\left(\xt,\yt, \lt\right) \in \X\times\Y\times\R^{k}_{+}$ is called an $(\epsilon, \delta)$-approximate stationary solution if it holds that 
\begin{align*}
\left\| Q\left(\lt\right) \right\| \leq \epsilon, \;\;
d\left(\xt,\yt, \lt\right) \leq \delta^{2}.
\end{align*}
\end{definition}

In words, at  an exact stationary solution of \eqref{eq:D2}, the inner-level minimax problem is solved to global optimality, while the outer-level one reaches  an exact stationary solution. 
However, finding such points in practice is unrealistic. The notion of approximate stationary solutions is then introduced as a relaxation. At an approximate stationary solution $\left(\xt,\yt, \lt\right)$ the inner-level problem is solved inexactly, in the sense that the distance of $(\xt,\yt)$ from the exact solution (of the inner-level problem \eqref{eq:D2_inn} with $\lb=\lt$) is small. In addition, the function $Q(\lt)$ is the stationarity gap of the outer-level problem at $\lb=\lt$. 

Next, we analyze the implications of a point being an (exact or approximate) stationary solution. In particular, 
we will show that at an exact stationary solution, the coupled linear constraints  as well as the complementary slackness condition will be satisfied.  
Further, at an approximate stationary solution, the choice of $\lt$ ensures that  the constraint violation is small.

\begin{proposition}\label{pro:stat} 
Suppose that Assumptions \ref{as:feas}, \ref{as:strong_dual}, \ref{as:f_str_cvx_x} hold, and let $\left(\xo,\yo,\lo\right)$ be a stationary solution of \eqref{eq:D2}. Then the following holds
\begin{subequations}
\begin{align}
 \yo  & = \arg\max_{\yy} L(\xo,\y,\lo),
 \quad A\xo + B\yo - \cb \leq {\bf{0}}, \quad \xo  =\arg\min_{\xx} \left\{H\left(\x,\lo\right) \right\} \label{eq:optimality} \\
 0& = \lo^{T} \left(-A\xo\left(\lo\right) - B\yo\left(\xo\left(\lo\right),\lo\right) + \cb\right), \quad  \lb \geq {\bf 0} . \label{eq:complementarity}
\end{align}
\end{subequations}

\end{proposition}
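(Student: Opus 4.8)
The plan is to unpack the definition of a stationary solution of \eqref{eq:D2} (Definition \ref{def:D2_stat}) and translate the two bullet conditions into the claimed relations, using the gradient formula for $G(\lb)$ from Lemma \ref{lem:inminmax_grad}. The first bullet, $(\xo,\yo)=(\xo(\lo),\yo(\xo,\lo))$, gives immediately the first and third equalities in \eqref{eq:optimality}, namely $\yo=\arg\max_{\yy}L(\xo,\y,\lo)$ and $\xo=\arg\min_{\xx}H(\x,\lo)$, simply by the definitions in \eqref{eq:argminmax}. So the real content is (i) extracting the feasibility inequality $A\xo+B\yo-\cb\le\mathbf{0}$, and (ii) extracting the complementary slackness identity \eqref{eq:complementarity}, both from the variational inequality $\langle\g G(\lo),\lb-\lo\rangle\ge 0$ for all $\lb\ge 0$.

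First I would substitute the gradient formula $\g G(\lo)=-A\xo(\lo)-B\yo(\xo(\lo),\lo)+\cb$ into the variational inequality. Denote $r:=\g G(\lo)=\cb-A\xo-B\yo$ (using the first bullet to identify $\xo(\lo)=\xo$ and $\yo(\xo,\lo)=\yo$); the stationarity condition reads $\langle r,\lb-\lo\rangle\ge 0$ for all $\lb\ge\mathbf{0}$. For \emph{feasibility}: fix an index $i$ and take $\lb=\lo+e_i$ (still nonnegative since $\lo\ge\mathbf{0}$), which yields $r_i\ge 0$, i.e.\ $\cb_i-[A\xo+B\yo]_i\ge 0$ for every $i$; hence $A\xo+B\yo-\cb\le\mathbf{0}$. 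For \emph{complementary slackness}: take $\lb=\mathbf{0}$ in the variational inequality to get $\langle r,-\lo\rangle\ge 0$, i.e.\ $\lo^T r\le 0$; and take $\lb=2\lo$ to get $\langle r,\lo\rangle\ge 0$, i.e.\ $\lo^T r\ge 0$. Combining, $\lo^T r=0$, which is exactly \eqref{eq:complementarity} after rewriting $r=-A\xo(\lo)-B\yo(\xo(\lo),\lo)+\cb$. (Alternatively, since $\lo\ge\mathbf{0}$ and we have just shown $r\ge\mathbf{0}$ componentwise, and the variational inequality at $\lb=\mathbf{0}$ gives $\lo^T r\le 0$, nonnegativity of each term $\lo_i r_i$ forces $\lo_i r_i=0$ for every $i$, which is the stronger componentwise complementarity and in particular implies \eqref{eq:complementarity}.)

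There is essentially no hard step here: the argument is a routine application of the first-order optimality (variational inequality) characterization of a constrained minimum over the nonnegative orthant, combined with the clean closed-form gradient of $G$ supplied by Lemma \ref{lem:inminmax_grad}. The only place requiring a small amount of care is making sure the test directions $\lb=\lo+e_i$ and $\lb=2\lo$ (and $\lb=\mathbf{0}$) are legitimate, i.e.\ lie in $\R^k_+$ — which they do because $\lo\ge\mathbf{0}$ — and then noting that the conjunction ``$\lo^T r\le 0$ and $r\ge\mathbf{0}$ and $\lo\ge\mathbf{0}$'' is equivalent to componentwise complementarity. I would also remark that Lemma \ref{lem:inminmax_grad} is applicable because Assumptions \ref{as:feas}, \ref{as:strong_dual}, \ref{as:f_str_cvx_x} are in force, guaranteeing that $\xo(\lo)$ and $\yo(\xo(\lo),\lo)$ are well-defined (unique, by the respective strong convexity/concavity) so that $r$ is unambiguously determined. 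The second part of the proposition, concerning approximate stationary solutions and the smallness of constraint violation, would be handled separately and is not part of this statement.
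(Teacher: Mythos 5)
Your proposal is correct and follows essentially the same route as the paper: both unpack Definition \ref{def:D2_stat}, substitute the gradient formula from Lemma \ref{lem:inminmax_grad} into the variational inequality $\langle \g G(\lo),\lb-\lo\rangle\ge 0$, and extract feasibility and complementarity by testing suitable nonnegative multipliers. The only (cosmetic) difference is that the paper derives componentwise complementarity via $\epsilon$-perturbations of single coordinates and a contradiction, whereas you obtain the aggregate identity $\lo^{T}\g G(\lo)=0$ directly from the test points $\lb=\mathbf{0}$ and $\lb=2\lo$ — a slightly cleaner step that still yields the statement as written.
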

\begin{proof}
See Appendix \ref{app:solution}. 
\end{proof}

\begin{proposition}\label{pro:appr_stat}
Suppose that Assumptions \ref{as:feas}, \ref{as:strong_dual}, \ref{as:f_str_cvx_x} hold, and let us denote with $\left(\xt,\yt, \lt\right) \in \X\times\Y\times\R^{k}_{+}$ an $(\epsilon, \delta)$-approximate stationary solution. Then, $ \forall i \in \mathcal{K}$  it holds that:
\begin{align*}
&\max\left\{0,\left[A\xt+B\yt-\cb \right]_i\right\} \leq 2 \sigma_{\rm max}  \cdot  \delta + \epsilon, \\
&  \|\yo - \yt \| \leq \delta, \; \|\xo - \xt \|\leq \delta,
\end{align*}
 where $(\xo,\yo) = \left( \xo\left(\lt\right),\yo\left(\xo,\lt\right) \right)$, and the two functions $ \xo(\cdot),\yo(\cdot)$ are defined in \eqref{eq:argminmax}; $\sigma_{\rm max}:=\max\left\{\|A\|,\|B\|\right\}$. 
\end{proposition}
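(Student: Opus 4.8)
The plan is to translate the two defining inequalities of an $(\epsilon,\delta)$-approximate stationary solution into the claimed bounds, using Lemma~\ref{lem:inminmax_grad} for the gradient of $G$ and a standard property of the projection operator. First, the bounds $\|\yo - \yt\| \le \delta$ and $\|\xo - \xt\| \le \delta$ are immediate: by definition $d(\xt,\yt,\lt) = \|\xo(\lt) - \xt\|^2 + \|\yo(\xo(\lt),\lt) - \yt\|^2 \le \delta^2$, and each summand is nonnegative, so each term is at most $\delta^2$; taking square roots gives the two distance bounds with $(\xo,\yo) = (\xo(\lt),\yo(\xo,\lt))$.

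Next I would handle the constraint-violation bound, which is the substantive part. Fix $i \in \mathcal{K}$. The idea is to compare the true constraint residual evaluated at $(\xt,\yt)$ with the gradient $\g G(\lt)$, which by Lemma~\ref{lem:inminmax_grad} equals $-A\xo - B\yo + \cb$, i.e.\ the negative of the constraint residual evaluated at the \emph{exact} inner solution $(\xo,\yo)$ associated to $\lt$. Write
\begin{align*}
[A\xt + B\yt - \cb]_i
&= [A\xo + B\yo - \cb]_i + [A(\xt-\xo) + B(\yt-\yo)]_i \\
&= -[\g G(\lt)]_i + [A(\xt-\xo)]_i + [B(\yt-\yo)]_i.
\end{align*}
The perturbation term is bounded by Cauchy--Schwarz and operator-norm bounds: $|[A(\xt-\xo)]_i| \le \|A\|\,\|\xt-\xo\| \le \sigma_{\rm max}\,\delta$ and likewise $|[B(\yt-\yo)]_i| \le \sigma_{\rm max}\,\delta$, so the two together contribute at most $2\sigma_{\rm max}\,\delta$. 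It remains to bound $\max\{0, -[\g G(\lt)]_i\}$ by $\epsilon$. This is where the projection characterization of $\|Q(\lt)\|$ enters: since $\lt \ge 0$ and $Q(\lt) = \frac1\alpha(\lt - \mathrm{proj}_{\R^k_+}(\lt - \alpha\g G(\lt)))$, a standard argument (e.g.\ the fact that the projected-gradient map controls the directional stationarity, or directly examining coordinates where $[\g G(\lt)]_i < 0$) shows that for each $i$, $\max\{0, -[\g G(\lt)]_i\} \le \|Q(\lt)\| \le \epsilon$. Combining, $\max\{0, [A\xt + B\yt - \cb]_i\} \le \max\{0,-[\g G(\lt)]_i\} + 2\sigma_{\rm max}\,\delta \le \epsilon + 2\sigma_{\rm max}\,\delta$, which is the claim.

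The main obstacle I anticipate is the clean coordinatewise bound $\max\{0,-[\g G(\lt)]_i\} \le \|Q(\lt)\|$. The subtlety is that $\mathrm{proj}_{\R^k_+}$ acts coordinatewise as $[\,\cdot\,]_+ = \max\{0,\cdot\}$, so $[Q(\lt)]_i = \frac1\alpha(\lt_i - \max\{0, \lt_i - \alpha[\g G(\lt)]_i\})$; one needs to check that in the coordinate achieving the worst-case violation of $-[\g G(\lt)]_i > 0$, this expression is at least $\min\{\lt_i/\alpha,\, -[\g G(\lt)]_i\}$ and argue why the $\lt_i/\alpha$ branch does not cause trouble — essentially because if $\lt_i$ is small while $-[\g G(\lt)]_i$ is large, the point is far from stationary in a different sense; but at an approximate stationary solution $\|Q(\lt)\|\le\epsilon$ forces this not to happen. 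Making this case analysis rigorous (and possibly invoking $\alpha$ small enough, or the boundedness of $\g G$ from Assumption~\ref{as:f_str_cvx_x}.\ref{ass:Lipschitz} and compactness of $\X,\Y$) is the one place where care is required; everything else is bookkeeping with the triangle inequality and operator norms. I would relegate the detailed verification to Appendix~\ref{app:solution}, consistent with the paper's style.
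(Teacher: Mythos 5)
Your proposal is correct and follows essentially the same route as the paper's proof: split the residual at $(\xt,\yt)$ into the residual at the exact inner solution $(\xo,\yo)$ plus a perturbation, bound the perturbation by $2\sigma_{\rm max}\cdot\delta$ using the condition $d(\xt,\yt,\lt)\le\delta^2$, and bound the exact residual by $\epsilon$ using $\|Q(\lt)\|\le\epsilon$. The one step you flag as delicate is in fact immediate: in any coordinate with $[\g G(\lt)]_i<0$, the pre-projection value $\tilde{\lambda}_i-\alpha[\g G(\lt)]_i$ is strictly positive because $\tilde{\lambda}_i\ge 0$, so the projection acts as the identity there, $[Q(\lt)]_i=[\g G(\lt)]_i$ exactly, and $-[\g G(\lt)]_i\le|[Q(\lt)]_i|\le\|Q(\lt)\|\le\epsilon$ — no smallness of $\alpha$ or boundedness of $\g G$ is needed.
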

\begin{proof}
See Appendix \ref{app:solution}. 
\end{proof}

So far, we have defined a suitable (dual) reformulation of the original problem \eqref{eq:inn_minimax}, and introduced a set of first-order stationarity conditions for it. To evaluate the quality of these solutions, we introduce an algorithm named Multiplier Gradient Descent (MGD) that can efficiently compute such solutions. Next, we provide a description of the proposed  algorithm.

First, we obtain an approximate solution of the inner-level problem \eqref{eq:D2_inn}, by using any reasonable  iterative subroutine that can solve a strongly-convex strongly-concave minimax problem. In Algorithm \ref{alg:alg}, such a subroutine is referred to as `Alg$(\cdot)$', which takes the current iterates $\x^{r},\y^{r}, \lb^{r}$ as well as the desired number of inner iterations as input. 
Such an algorithm can be, for instance, the gradient descent-(multi-step) ascent \cite{nouiehed2019solving} or the extragradient method \cite{mokhtari2019unified}. 
Afterwards, one iteration of gradient descent is performed on the outer problem \eqref{eq:D2_out}.  
In the subsequent discussion, we refer to $r$ as the {\it outer-iteration} index.

\begin{algorithm}[tb]
   \caption{Multiplier Gradient Descent (MGD)}
\begin{algorithmic}
   \STATE {\bfseries Input:} $\x^0,\y^0, \lb^0, T, K, \alpha, A, B, \cb$
   \FOR{$r=0$ {\bfseries to} $T-1$}
   \STATE \# Solve inner minimax problem by running Alg using $K$ iterations;
   \STATE \quad \# e.g. Alg: Gradient Descent-(multi-step) Ascent (GDA) ;
   \STATE \quad  $\left(\x^{r+1},\y^{r+1}\right) \leftarrow {\rm Alg}\left(\x^{r},\y^{r}, \lb^{r};K\right)$ 
   \STATE \# Solve outer min problem
   \STATE $\lb^{r+1} = \text{proj}_{\R^{k}_{+}} \left[\lb^{r} - \alpha \left( -A\x^{r+1} - B\y^{r+1} +\cb \right)\right]$
   \ENDFOR
\end{algorithmic}
\label{alg:alg}
\end{algorithm}

 
We should note here that the MGD algorithm resembles the classical dual-ascent algorithm (for constrained minimization problems), and its analysis also follows similar ideas. The  interested readers are referred to 
the online version of this paper for the formal convergence claims and the proof \cite{tsaknakis2021minimax}. We emphasize that algorithm design and analysis are {\it not} the central focus of this work. The MGD algorithm  just gives us a way to evaluate the quality of the proposed stationary solution. One may certainly be able to design more efficient algorithms (for example, based on extra-gradient, or momentum acceleration techniques), but this is beyond the scope of this work.  

Below we briefly comment on the convergence properties of MGD.

\begin{remark}
First of all, under the Assumptions \ref{as:feas}, \ref{as:strong_dual}, and~\ref{as:f_str_cvx_x} we can show that the outer-level objective $G(\lb)$ has Lipschitz continuous gradients. Based on this result, 
 we can show (under the same assumptions as above) that the MGD  algorithm can asymptotically compute an \textit{exact stationary solution}, i.e., 
$$\lim_{r\to\infty} \|Q(\lb^r)\| = 0, \quad \lim_{r\to\infty} d(\x^{r},\y^{r},\lb^r) =0.$$ 
This result implies that the properties of Proposition \ref{pro:stat} are also satisfied asymptotically, in particular, the complementary slackness condition holds, i.e.,
$$\quad \lim_{r\to\infty} \left\langle \lb^{r}, A\xo\left(\lb^{r}\right) + B\yo\left(\xo\left(\lb^{r}\right),\lb^{r}\right) - \cb  \right\rangle = 0. $$

Moreover, we can prove that MGD can reach an $(\epsilon,\delta)$-\textit{approximate stationary solution} using at most $\mathcal{O}( \frac{1}{\epsilon^2} +  \frac{1}{\sqrt{\delta}})$ outer iterations. 
However, in this case in order to ensure that the complementary slackness violation also goes to zero (as a function of $\epsilon$ and $\delta$), we need to impose the following regularity assumption: for every $\x\in \mathcal{X}$, $0$ is in the interior of the set $\{A \x + B\y  - \cb \mid \y\in \mathcal{Y}\}$; note that this assumption is only slightly more restrictive than the one made in  Assumption \ref{as:strong_dual}.\ref{ass:slater}. Then, we can show that 
at an $(\epsilon, \delta)$-approximate stationary solution $\left(\xt,\yt, \lt\right)$ obtained by the MGD algorithm, the complementary slackness violation $\langle \tilde{\lb},  A\xt + B\yt - \cb\rangle$ is upper and lower bounded by functions of $\epsilon$ and $\delta$, and these functions converge to zero as  $\epsilon\to 0, \delta\to 0$.
\end{remark}

\section{Experiments}\label{sec:exp}

In this section we perform a number of experiments that develop attacks for a minimum cost network flow problem,  using the proposed formulation \eqref{eq:net_atttack}. Note that our main goal is to evaluate the quality of the formulation and the proposed stationary solutions. To begin with, in our experiments we adopt the setting described in Sec. \ref{sec:net_flow}, and  generate networks of $n$ nodes at random using the Erdos-Renyi model with parameter $p$ (i.e., the probability that an edge appears on the graph is $p$).
Moreover, the capacity $p_{ij} $ and the cost coefficients $w_{ij}$ of the edges are generated uniformly at random in the interval $[1,2]$, while we set the demand (on the sink) as $d\%$ of the sum of capacities of the edges exiting the source, and $d$ is a parameter to be chosen.

The adversary will generate the attack by solving problem \eqref{eq:net_atttack}. We choose $q_{e}(x_{e}) = w_{e}x_{e}$, that is the  cost per unit flow is a function of the amount of flow.
Further, we add a small regularizer to the adversarial's problem to make it strongly concave. The specific problem to be solved is listed below ($\eta > 0$ is a small constant): 
\begin{align}\label{eq:experiment}
	\max\limits_{\stackrel{\bf{0} \leq \bf{y} \leq \bf{p}}{\sum_{(i,j) \in E} y_{ij} = b}} 
	 & \min \limits_{\stackrel{\bf{0} \leq \bf{x} \leq \bf{p}}{\sum\limits_{(i,t) \in E} x_{it} = r_t} } \sum_{(i,j) \in E} w_{ij}\cdot (x_{ij}+y_{ij})\cdot x_{ij} -  \frac{\eta}{2} \|\y\|^2\\
	 \text{s.t. } &  \bf{x} + \bf{y} \leq \bf{p} \nonumber \\
	 &\hspace{-3mm}  \sum_{(i,j) \in E} x_{ij} - \sum\limits_{(j,k) \in E} x_{jk}= 0, \; \forall j \in V \setminus \{s,t\} \nonumber.
\end{align}

The above problem is then solved using the following two different approaches, while the algorithm parameters are chosen so that the performances are optimized:
\begin{enumerate}
    \item Apply the MGD algorithm (Algorithm \ref{alg:alg}) to solve problem \eqref{eq:D2}.
    Choose $T=100$, and use $K=5$ steps of the gradient descent-ascent method \cite{lin2020gradient} to solve the inner-problem. The stepsizes for all variables are set to $0.5$.
    \item Apply the gradient descent-ascent algorithm (with multiple descent steps) \cite{nouiehed2019solving} to solve problem \eqref{eq:D3}. Choose $T=100$, and run $5$ iterations for the inner minimization problem. The stepsizes for all variables are set to $0.5$. We refer to this approach as GDA.
\end{enumerate}
To evaluate the performance of the proposed attack we test the following baselines:
\begin{itemize}
\item \underline{Random attack}: An attack flow vector $\x$ is generated at random, under the given budget constraints $b$.

\item \underline{Max capacity attack}: The attack removes the edge with the largest capacity.

\item \underline{Greedy attack}: The edges are sorted in an ascending order w.r.t. the cost coefficients, and they are removed following this order until the capacity budget $b$ is met. If the remaining budget does not suffice to remove an edge (i.e., zero its capacity), then the capacity of the edge is reduced by this amount.

\item  \underline{NI attack}: We generate an attack following a method that is used in (Generalized) Nash Equilibria games. Specifically, we minimize a Nikaido-Isoda (NI)-based reformulation \cite{facchinei2010generalized, von2009optimization} of our problem, that is
\begin{subequations}
\begin{align}
     V(\x,\y) &= -V_{1}(\y) - V_{2}(\x)\label{eq:v}\\
    {\rm where}\;\;   V_1(\y) & : =\min_{\x \in \X} f(\x,\y), \quad V_2(\x): =\max_{\y \in \Y} f(\x,\y) \label{eq:v1v2}.
\end{align} 
\end{subequations}
We approximately solve the (strongly-convex or strongly-concave) problems in \eqref{eq:v1v2} using 25 steps of gradient descent and gradient ascent, respectively. Then, the objective  $V$ is minimized by using $100$ steps of gradient descent. 

\end{itemize}

The performance of the experiments are evaluated according to the following procedure. First, an attack is generated, and the corresponding link capacities are reduced according to the attack pattern.   Then, the minimum cost flow is computed based on the available link capacities. Let $\x_{\rm att}, \x_{\rm cl}$ denote the minimum cost flow assignment after and before the attack, respectively.   We define the following {\it relative} increase of the cost as a performance measure:
$\rho: = \frac{q_{\rm tot}(\x_{\rm att}) - q_{\rm tot}(\x_{\rm cl})}{q_{\rm tot}(\x_{\rm cl})}.$
The higher the increase the more successful/powerful the attack. Furthermore, in our experiments we compute the above measure as a function of the adversary's budget, and for each budget level we perform the experiment $15$ times (each time a new graph is created, with different capacity and cost vectors) and average the results. 

The results are illustrated in Fig. \ref{fig:min_cost_exp}. Observe that the proposed minimax based attack is more powerful compared  to the three baselines, as it leads to the largest minimum cost flow after the attack in both networks (regardless of which algorithm we use).  This also implies that the stationary solutions defined in the previous section correspond to relatively ``strong'' attacks. We also notice that MGD is slightly more advantageous compared to the GDA. It is worth mentioning that the network we generated makes the attack problem {\it non-trivial}, in the sense that random attack does not work (the relative cost increase is imperceptible).  
Further, we would like to point out that the relatively high variance exhibited in the experiments is not unexpected since at every run we generate a different graph. In particular, in the experiments with $p=0.75$, every new graph differs significantly from the previous one, leading to higher variance in this case.

\begin{figure}
\centering
\begin{subfigure}[b]{0.35\textwidth}
\centering
\includegraphics[scale=0.35]{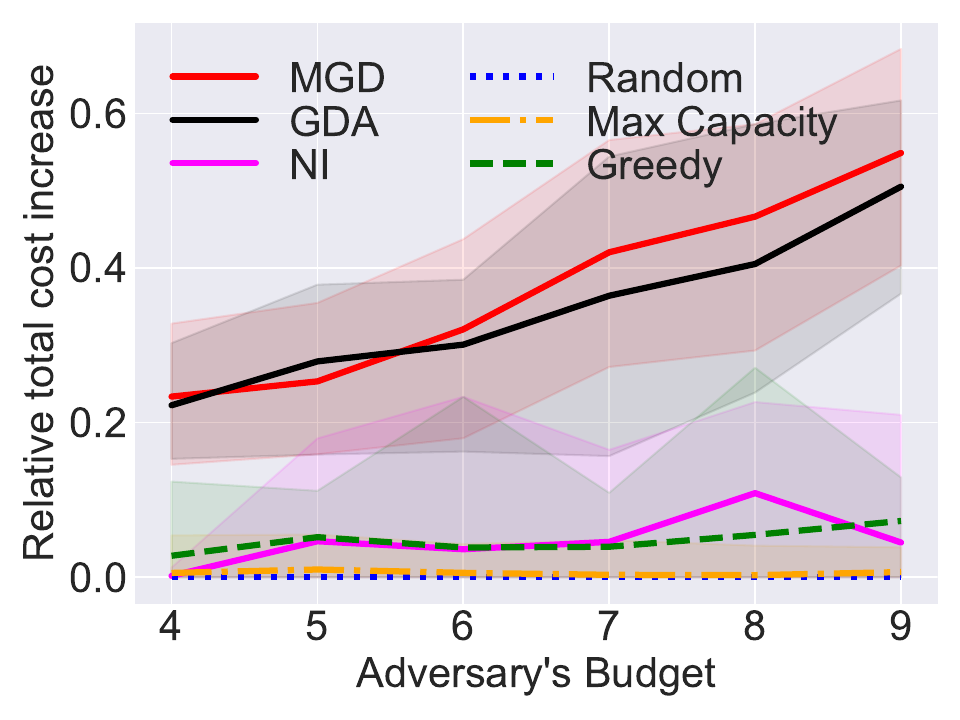}
\caption{$n=15$, $p=1$, $d=20\%$}\label{fig:cost_budget11}
\end{subfigure}
\hspace{4mm}
\begin{subfigure}[b]{0.35\textwidth}
\centering
\includegraphics[scale=0.35]{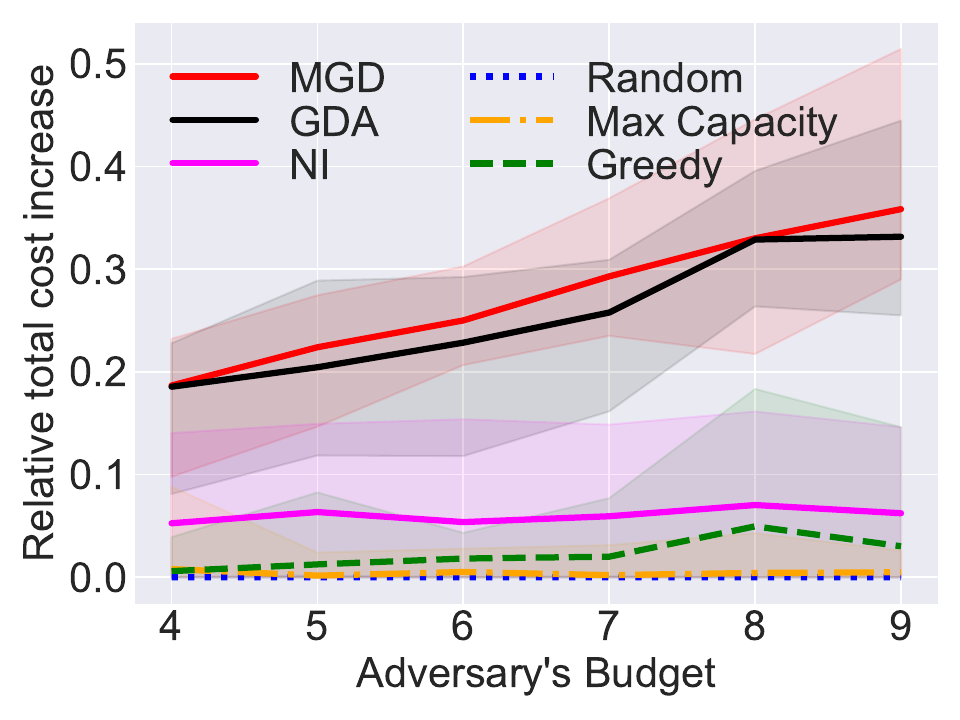}
\caption{$n=20$, $p=1$, $d=30\%$}\label{fig:cost_budget12}
\end{subfigure}
\\
\begin{subfigure}[b]{0.35\textwidth}
\centering
\includegraphics[scale=0.35]{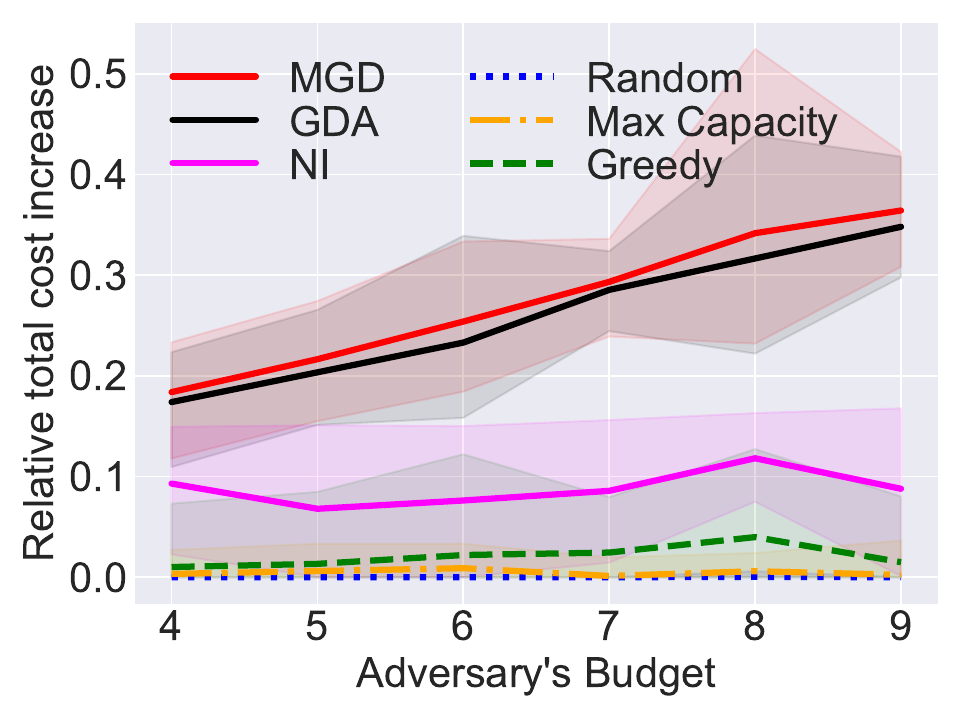}
\caption{$n=20$, $p=1$, $d=40\%$}\label{fig:cost_budget13}
\end{subfigure}
%
\hspace{4mm}
\begin{subfigure}[b]{0.35\textwidth}
\centering
\includegraphics[scale=0.35]{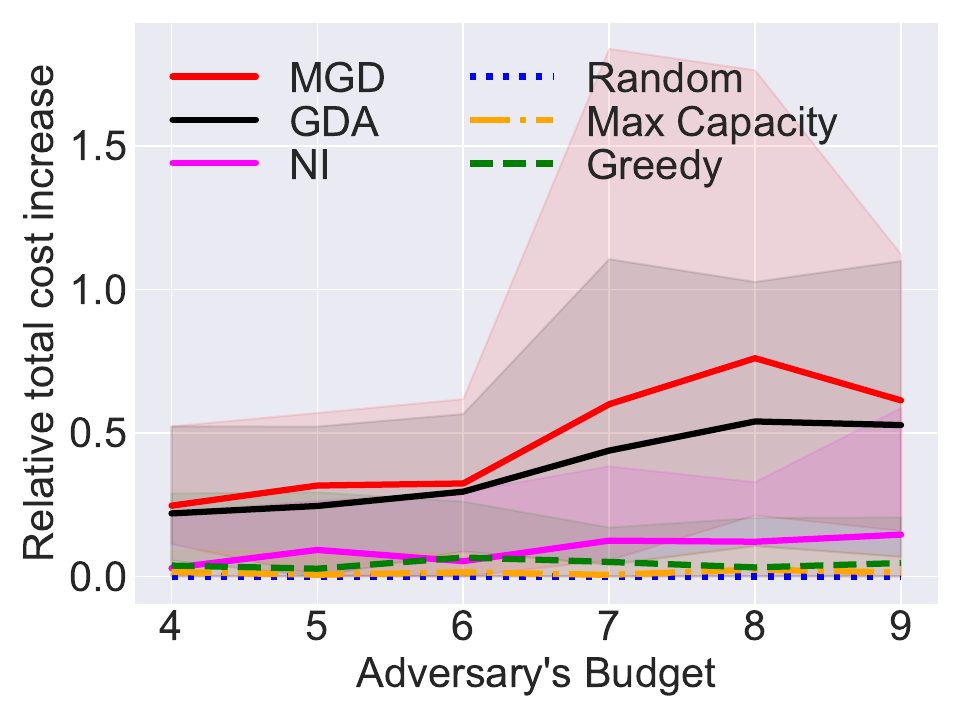}
\caption{$n=15$, $p=0.75$, $d=20\%$}\label{fig:cost_budget21}
\end{subfigure}
\\
\begin{subfigure}[b]{0.35\textwidth}
\centering
\includegraphics[scale=0.35]{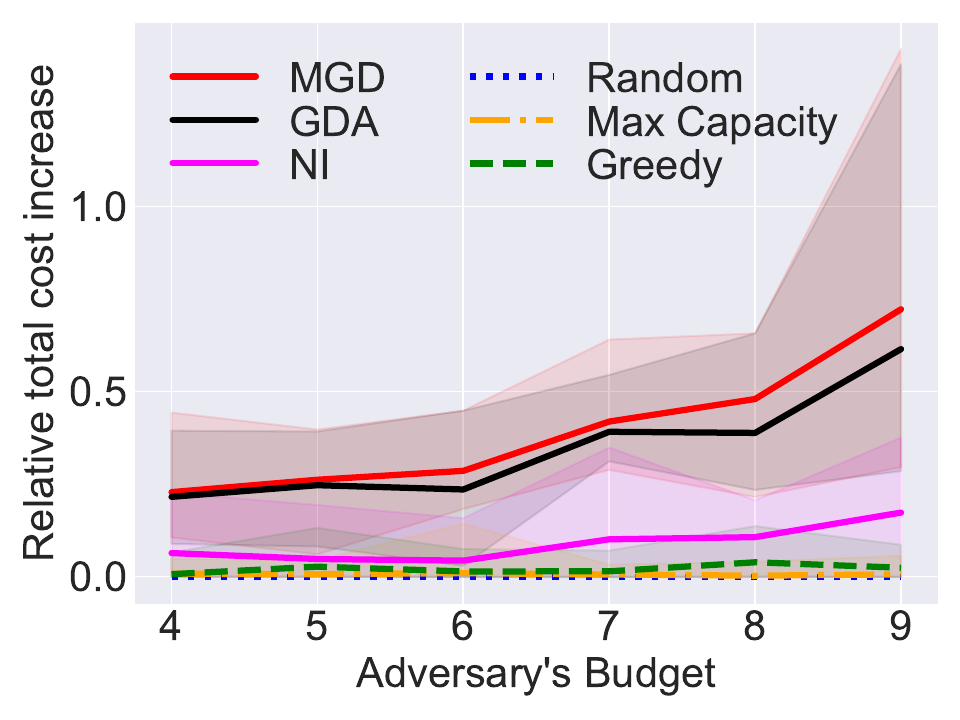}
\caption{$n=20$, $p=0.75$, $d=30\%$}\label{fig:cost_budget22}
\end{subfigure}
\hspace{4mm}
\begin{subfigure}[b]{0.35\textwidth}
\centering
\includegraphics[scale=0.35]{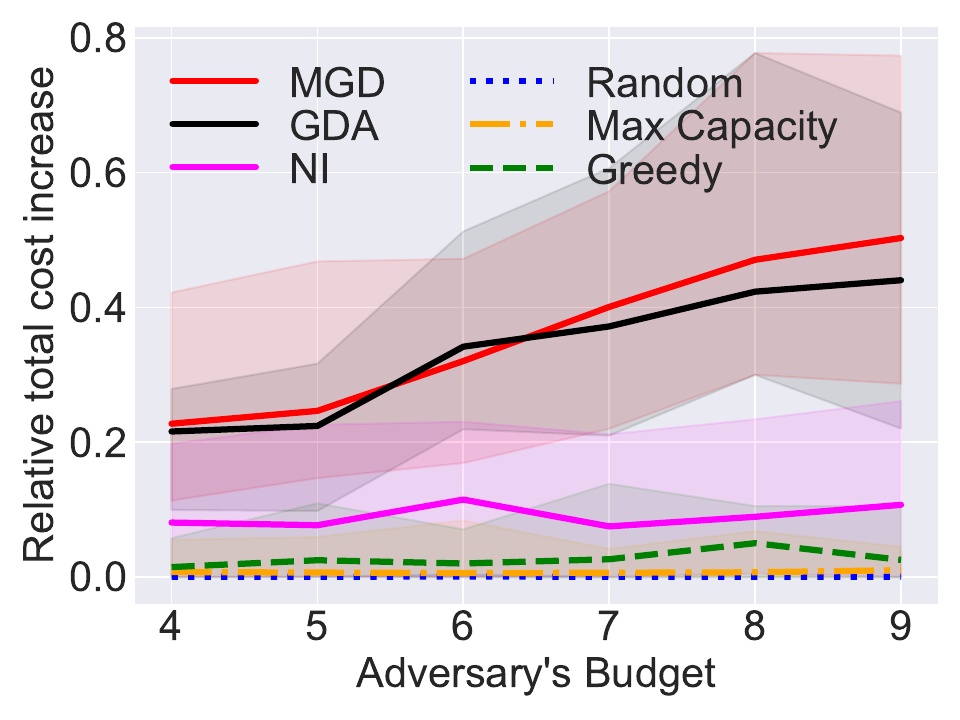}
\caption{$n=20$, $p=0.75$, $d=40\%$}\label{fig:cost_budget23}
\end{subfigure}
\caption{The evolution of the relative increase (due to the attack) of the total cost as a function of the budget of the adversary, for different settings. The range of the results (between minimum and maximum value across all runs) is also depicted using a shaded region around the average cost curve.}
\label{fig:min_cost_exp}
\end{figure}
 
\subsection{Effect of the Regularization Parameter}
We perform an additional set of experiments, in which we study the effect of the regularization parameter $\eta$ on the problem solution by examining the performance of the MGD algorithm. We would like to stress here that we select relatively small values of $\eta$, because the regularizer was introduced to impose the strong concavity assumption in $\y$ without significantly altering the original problem. In  Fig. \ref{fig}, we plot the evolution of the relative increase of the total cost as a function of the adversary's budget, for $\eta=10^{-8}, 10^{-4},10^{-2}$. We consider two different settings by changing the parameter $p$, while we only consider graphs with $n=15$ nodes; we also use the same stepsize and the same number of inner iterations as in the previous experiments. 

In Fig. \ref{fig} we see that it is not clear which value of $\eta$ leads to the best performance. Therefore, we cannot reach a clear conclusion about the effect of the size of the regularization parameter.  However, we also notice that the performance (such as the relative total cost increase) attained does not differ significantly among the different values of $\eta$, as long as the regularization parameter $\eta$ is small enough (such as the ones we test in our experiments, i.e., $\eta \leq 10^{-2}$). This behavior is desirable, since our goal is only to make the objective strongly concave in $\y$ in a way that the original problem (and thus its solutions) is not altered significantly.

\begin{figure}
  \centering
     \begin{subfigure}[t]{0.46\textwidth}
         \centering
     \includegraphics[width=1\textwidth]{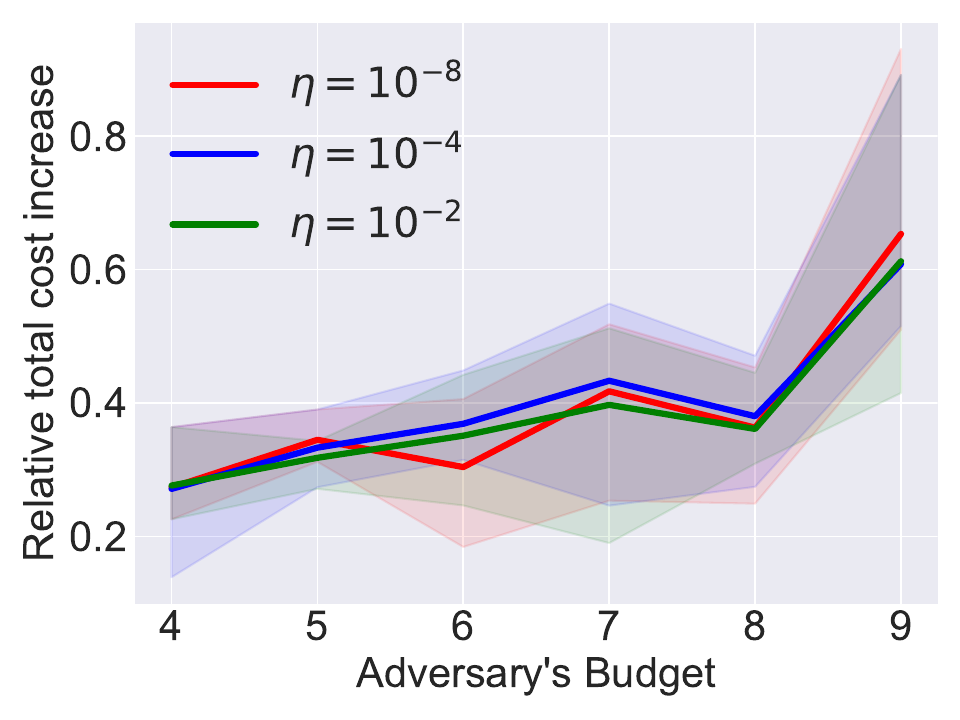} 
     \caption{$n=15, d=20\%,p=1$}
     \end{subfigure}  
     \begin{subfigure}[t]{0.46\textwidth}
         \centering
     \includegraphics[width=1\textwidth]{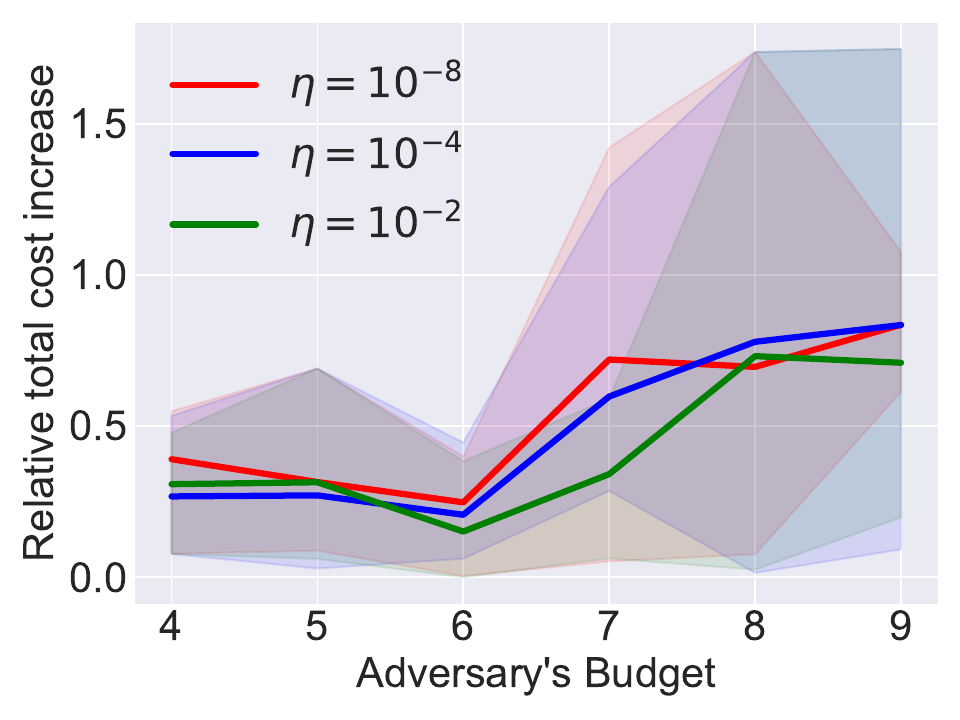} \caption{$n=15, d=20\%,p=0.75$}
     \end{subfigure}
    \caption{The evolution of the relative increase (due to the attack) of the total cost as a function of the budget of the adversary, for different values of the regularization parameter $\eta$. The range of the results (between the minimum and maximum value across all runs) is also depicted using a shaded region around the average cost curve.}
    \label{fig}
\end{figure}

\section{ Concluding Remarks}

In this paper we study a minimax problem, where the constraints of the inner maximization problem depends linearly on the variable of the outer one. This special structure makes the problem NP-hard, even when the objective is strongly-convex strongly-concave. We then develop the duality theory of the problem, and establish conditions under which strong duality holds. These results allow us to introduce  duality-based reformulations of the original problem,  and propose a set of first-order stationarity conditions.
Moreover, we demonstrate that the proposed formulation can model adversarial attacks on network flow problems.
In the future, we plan to study more applications of this formulation, as well as to related problems where the linear constraints are in the outer problem. 

It is possible to extend the main results (e.g., the duality theory, the definition and properties of stationary points) to problems with general, not necessarily linear, constraints of the form $g(\x,\y)\leq 0$, where $g:\R^{n} \times \R^{m} \to \R^{k}$. This is possible with addition of the following assumptions (below we provide only the key assumptions):
\begin{enumerate}
    \item $g_{i}(\cdot,\y)$ is concave $\forall \y \in \Y$ and $g_{i}(\x,\cdot)$ is convex $\forall \x \in \X, \forall i \in \{1,\ldots,k\}$. 
    \item For every $\xx$ there exists  $\y \in {\rm relint}(\Y)$ such that $g(\x,\y) < {\bf 0}$. 
    \item 
    $|g_i(\x_{1},\y_{1})-g_i(\x_{2},\y_{2})| \leq L\left[\|\x_{1}-\x_{2}\| + \|\y_{1}-\y_{2}\| \right],\; \forall~\x_{1},\x_{2}\in \mathcal{X}, ~\y_{1},\y_{2}\in \mathcal{Y}$, for some $L>0$, $\forall i \in \{1,\ldots,k\}$.
\end{enumerate}
Then, the duality theory developed in Theorems \ref{pro:weak_dual} and \ref{pro:strong_dual} will hold. Also, the definition of the (exact and approximate) stationary points in Definitions \ref{def:D2_stat} and \ref{def:D2_approx_stat}, as well as their respective properties in Propositions \ref{pro:stat}, \ref{pro:appr_stat}, will apply; we just need to substitute the linear constraints $A\x +B\y + \cb \leq 0$ with the more general ones $g(\x,\y)\leq 0$.
Finally, let us note that the NP-hardness results (Propositions \ref{eq:prob_hard} and \ref{eq:prob_hard2}) and the relations between the coupled constrained problems (Proposition \eqref{prop:prob_relat}) automatically apply in the case of the general constraints $g(\x,\y)\leq 0$.
Note that despite the ability to generalize our results, in this work we focus on linear constraints, since the changes required for this generalization are mainly technical and do not lead to new insights to the problem itself. 

\newpage
\appendix

\section{Proofs}\label{app:proof}

\subsection{Minimax problems with coupled linear constraints}\label{app:minimax}

\begin{proof}[\textbf{Proof of Proposition \ref{prop:prob_relat}}]
Below we provide the proof for the relations described in Proposition \ref{prop:prob_relat}.

\underline{$v \left( \mbox{\rm \ref{eq:inn_minimax}} \right) 
\leq v \left(\mbox{\rm \ref{eq:out_minimax}} \right) $}

For the function $\phi(\x): = \max\limits_{\y \in \Y, A\x+B\y\leq\bf{c}} f(\x,\y)$ defined in \eqref{eq:opt:inner2}, it holds that:
$$\phi(\x) \leq \max\limits_{\y \in \Y} f(\x,\y), \; \forall \xx.$$
This is because the RHS has a larger constraints set. Then, minimizing both sides of the above inequality, we obtain:
\begin{align}\label{eq:prf_rel_a}
\min\limits_{\x \in \X, A\x+B\y^*(\x) \leq \cb} \phi(\x) 
\leq \min\limits_{\x \in \X, A\x+B\y^*(\x) \leq \cb} \left( \max\limits_{\y \in \Y} f(\x,\y) \right),
\end{align}
where $\y^*(\x)$ is defined in \eqref{eq:out_minimax}.

Similarly, observe that the following holds:
\begin{align}\label{eq:prf_rel_b}
\min\limits_{\x \in \X } \phi(\x)
\leq \min\limits_{\x \in \X, A\x+B\y^*(\x) \leq \cb} \phi(\x). 
\end{align}
Combining \eqref{eq:prf_rel_a} and \eqref{eq:prf_rel_b} implies that:
\begin{align*}
\min\limits_{\x \in \X } \phi(\x)
&\leq \min\limits_{\x \in \X, A\x+B\y^*(\x) \leq \cb} \left( \max\limits_{\y \in \Y} f(\x,\y) \right) \\
\stackrel{\eqref{eq:opt:inner2}}{\Rightarrow} \min\limits_{\x \in \X } \left(\max\limits_{\y \in \Y, A\x+B\y\leq\bf{c}} f(\x,\y) \right)
&\leq 
\min\limits_{\x \in \X, A\x+B\y^*(\x) \leq \cb} \left( \max\limits_{\y \in \Y} f(\x,\y) \right).
\end{align*}
That is, $v \left( \mbox{\rm \ref{eq:inn_minimax}} \right) 
\leq v \left(\mbox{\rm \ref{eq:out_minimax}} \right) $.

To show that the strict inequality can also hold, consider the following problems, in which it is shown that $v \left(\mbox{\rm \ref{eq:inn_minimax}} \right) = -1$, and $v \left(\mbox{\rm  \ref{eq:out_minimax}} \right) = 1$:
\begin{align*}
\min\limits_{x \in [0,1]} \left( \max\limits_{y \in [0,1], x + y=1} x^{2}-y^{2} \right)
& = \min\limits_{x \in [0,1]} x^{2}-(1-x)^{2} 
= \min\limits_{x \in [0,1]} 2x - 1
= - 1,\\
\min\limits_{x \in [0,1], x + y^*(x)=1} \left( \max\limits_{y \in [0,1]} x^{2}-y^{2}\right) & = 
\min\limits_{x \in [0,1], x=1} x^{2} 
= 1.
\end{align*}
Therefore, in the above example it holds that $v \left(\mbox{\rm  \ref{eq:inn_minimax}} \right) <v \left(\mbox{\rm \ref{eq:out_minimax}} \right)$. This allows us to exclude the possibility that equality holds between \eqref{eq:inn_minimax} and \eqref{eq:out_minimax}, for all possible problem instances.

\underline{$v \left(\mbox{\rm  \ref{eq:inn_maximin}} \right) \geq v \left(\mbox{\rm  \ref{eq:out_maximin}} \right)$}

Let us define $\psi(\y): = \min\limits_{\xx, A\x+B\y\leq\bf{c}} f(\x,\y)$, 
it holds that 
$$\psi(\y) \geq \min\limits_{\xx} f(\x,\y), \; \forall \yy,$$
since the RHS has a larger constraints set. Then, we can maximize both sides of the above inequality, and obtain:
\begin{align}\label{eq:prf_rel_c}
\max\limits_{\yy, A\x^*(\y)+B\y \leq \cb} \psi(\y) 
\geq \max\limits_{\yy,A\x^*(\y)+B\y \leq \cb} \left( \min\limits_{\xx} f(\x,\y) \right),
\end{align}
where $\x^*(\y)$ is defined in \eqref{eq:out_maximin}.
In addition, we can easily see that 
\begin{align}\label{eq:prf_rel_d}
\max\limits_{\yy} \psi(\y)
\geq \max\limits_{\yy, A\x^*(\y)+B\y \leq \cb} \psi(\y). 
\end{align}
Combining \eqref{eq:prf_rel_c} and \eqref{eq:prf_rel_d} we obtain:
\begin{align*}
\max\limits_{\yy} \psi(\y)
&\geq \max\limits_{\yy,A\x^*(\y)+B\y \leq \cb} \left(\min\limits_{\xx} f(\x,\y)\right) \\
\Rightarrow \max\limits_{\yy} \left(\min\limits_{\xx, A\x+B\y\leq\bf{c}} f(\x,\y) \right)
&\geq 
\max\limits_{\yy,A\x^*(\y)+B\y \leq \cb} \left( \min\limits_{\xx} f(\x,\y) \right).
\end{align*}
That is, $v \left(\mbox{\rm  \ref{eq:inn_maximin}} \right) \geq v \left(\mbox{\rm  \ref{eq:out_maximin}} \right)$.

To show that the strict inequality can also hold, consider the following problems, in which it is shown that $v \left(\mbox{\rm  \ref{eq:inn_maximin}} \right) = 1$, and $v \left( \mbox{\rm \ref{eq:out_maximin}} \right) = -1$:
\begin{align*}
\max\limits_{y \in [0,1]} \left(\min\limits_{x \in [0,1], x + y=1} x^{2}-y^{2} \right)
& = \max\limits_{y \in [0,1]} (1-y)^{2}-y^{2} 
= \max\limits_{y \in [0,1]} -2y + 1
= 1,\\
\max\limits_{y \in [0,1], x^*(y) + y=1} \left( \min\limits_{x \in [0,1]} x^{2}-y^{2} \right) & = 
\max\limits_{y \in [0,1], y=1} -y^{2} 
= -1.
\end{align*}

In the above example it holds that $v \left( \mbox{\rm \ref{eq:inn_maximin}}\right) > v \left( \mbox{\rm \ref{eq:out_maximin}} \right)$. This allows us to exclude the possibility that equality can hold between \eqref{eq:inn_maximin} and \eqref{eq:out_maximin}, for all possible problem instances.

\underline{$v \left(\mbox{\rm  \ref{eq:out_minimax}} \right) \geq v \left( \mbox{\rm \ref{eq:out_maximin}} \right)$}

To begin with, observe that 
\begin{align}\label{eq:prf_rel_1}
\min\limits_{\x \in \X, A\x+B\y^*(\x) \leq \bf{c}} 
\left(\max\limits_{\y \in \Y} f(\x,\y) \right) 
\geq
\min\limits_{\x \in \X} 
\left(\max\limits_{\y \in \Y} f(\x,\y) \right), 
\end{align}
since the outer problem in the RHS has a larger constraints set. Further, the max-min inequality implies that:
\begin{align}\label{eq:prf_rel_2}
\min\limits_{\x \in \X} 
\left(\max\limits_{\y \in \Y} f(\x,\y) \right)
\geq 
\max\limits_{\y \in \Y} 
\left(\min\limits_{\x \in \X} f(\x,\y) \right).
\end{align}
Moreover, we have that: 
\begin{align}\label{eq:prf_rel_3}
\max\limits_{\y \in \Y} 
\left(\min\limits_{\x \in \X} f(\x,\y) \right)
\geq
\max\limits_{\y \in \Y, A\x^*(\y)+B\y \leq \bf{c}} 
\left(\min\limits_{\x \in \X} f(\x,\y) \right).
\end{align}
Combining equations \eqref{eq:prf_rel_1}, \eqref{eq:prf_rel_2},  \eqref{eq:prf_rel_3}, we obtain the following relation:
\begin{align}
\min\limits_{\x \in \X, A\x+B\y^*(\x) \leq \bf{c}} 
\left(\max\limits_{\y \in \Y} f(\x,\y) \right) 
\geq
\max\limits_{\y \in \Y, A\x^*(\y)+B\y \leq \bf{c}} 
\left(\min\limits_{\x \in \X} f(\x,\y) \right).
\end{align}
That is,  $v \left(\mbox{\rm  \ref{eq:out_minimax}} \right) \geq v \left( \mbox{\rm \ref{eq:out_maximin}} \right)$.


\underline{$v \left(\mbox{\rm  \ref{eq:inn_minimax}} \right), v \left( \mbox{\rm \ref{eq:out_maximin}} \right)$}

To begin with, for $A=B=c=0$ problems \eqref{eq:inn_minimax} and \eqref{eq:out_maximin} reduce to `classical' (w/o coupled constraints) minimax problems. The max-min inequality implies that $v \left( \mbox{\rm \ref{eq:inn_minimax}} \right)\geq v \left( \mbox{\rm \ref{eq:out_maximin}} \right)$.   

Moreover, consider the following problems, in which it is shown that  $v \left( \mbox{\rm \ref{eq:inn_minimax}} \right) = -3$ and $v \left( \mbox{\rm \ref{eq:out_maximin}} \right) = - 1$:
\begin{align*}
   \min\limits_{x \in [-1,1]} \left(\max\limits_{y \in [0,2], x + y=1} x^{2}-y^{2}\right)
& = \min\limits_{x \in [-1,1]} x^{2} - (1-x)^{2}
= \min\limits_{x \in [-1,1]}  2x - 1
= -3,\\ \max\limits_{y \in [0,2], x^*(y) + y=1} \left(\min\limits_{x \in [-1,1]} x^{2}-y^{2}\right) & = 
\max\limits_{y \in [0,2], y=1} -y^{2} 
= -1.
\end{align*}

The above examples show that there are problem instances where the relationship $v \left( \mbox{\rm \ref{eq:inn_minimax}} \right)< v \left( \mbox{\rm \ref{eq:out_maximin}} \right)$ holds between the two problems. 

\underline{$v \left( \mbox{\rm \ref{eq:inn_maximin}} \right)$, $v \left( \mbox{\rm \ref{eq:out_minimax}} \right)$}

To begin with, for $A=B=c=0$ problems \eqref{eq:inn_maximin} and \eqref{eq:out_minimax} reduce to the `classical' (w/o coupled constraints) minimax problems. The max-min inequality implies that $v \left(\mbox{\rm  \ref{eq:inn_maximin}} \right) \leq v \left( \mbox{\rm \ref{eq:out_minimax}} \right)$.   

Moreover, consider the following problems, in which it is shown that  $v \left( \mbox{\rm \ref{eq:inn_maximin}} \right) = 3$ and $v \left( \mbox{\rm \ref{eq:out_minimax}} \right) = 1$:
\begin{align*}
    \max\limits_{y \in [-1,0]} \left(\min\limits_{x \in [1,2], x + y=1} x^{2}-y^{2}\right)
&= \max\limits_{y \in [-1,0]} (1-y)^{2} - y^{2}
= \max\limits_{y \in [-1,0]} - 2y + 1
=3\\
\min\limits_{x \in [1,2], x + y^*(x)=1} \left( \max\limits_{y \in [-1,0]} x^{2}-y^{2}\right) & = 
\min\limits_{x \in [1,2], x=1} x^{2} 
= 1.
\end{align*}

The above examples show that there are problem instances where the relationship $v \left(\mbox{\rm \ref{eq:inn_maximin}}\right) > v \left(\mbox{\rm  \ref{eq:out_minimax}}\right)$ is realized between the two problems. 

\underline{$v \left(\mbox{\rm  \ref{eq:inn_minimax}} \right), v \left( \mbox{\rm \ref{eq:inn_maximin}}\right)$}

To begin with, for $A=B=c=0$ problems \eqref{eq:inn_minimax} and \eqref{eq:inn_maximin} reduce to the `classical' (w/o coupled constraints) minimax problems. The max-min inequality holds, that is $v \left(\mbox{\rm  \ref{eq:inn_minimax}} \right) \geq v \left(\mbox{\rm  \ref{eq:inn_maximin}} \right)$. 

Moreover, consider the following problems, in which it is shown that $v \left(\mbox{\rm  \ref{eq:inn_minimax}} \right) =-1$ and  $v \left(\mbox{\rm  \ref{eq:inn_maximin}} \right) = 1$:
    \begin{align}
     \min\limits_{x \in [0,1]} \left(\max_{y \in [0,1], x + y = 1} x^2-y^{2} \right)
    & =\min\limits_{x \in [0,1]} x^2 - (1-x)^2
    =\min\limits_{x \in [0,1]} -1 + 2x
    =-1\\
    \max\limits_{y \in [0,1]} \left(\min_{x \in [0,1], x + y = 1} x^2-y^{2} \right)
    & =\max\limits_{y \in [0,1]} (1-y)^2-y^2 
    =\max\limits_{y \in [0,1]} 1 - 2y
    =1.
    \end{align}
The above examples show that there are problem instances where the relationship $v \left(\mbox{\rm \ref{eq:inn_minimax}}\right) < v \left( \mbox{\rm \ref{eq:out_maximin}}\right)$ is realized between the two problems. 
\end{proof}


\subsection{First-order stationarity conditions}\label{app:solution}

In the proof of Lemma \ref{lem:inminmax_grad} we make use of Danskin's theorem \cite[Corollary, pg. 1167]{bernhard1995danskin}. Below we provide its formal statement.
\begin{lemma}[Danskin's Theorem]
Consider the problem $\phi(\x) = \max_{\y \in \Y} f(\x,\y)$, where $f:\R^{n} \times Y \to \mathbb{R}$ is differentiable and strongly concave in $\y$, and $\Y \subseteq \R^{m}$ is a non-empty compact set. Also, consider the set $\Y^{\ast}(\x) = \arg\max_{\y \in \Y} f(\x,\y)$ which is a singleton for any $\x \in \R^{n}$. Then, $\phi(\x)$ is differentiable and it holds that $\nabla \phi(\x) = \nabla_{x} f(\x,\y^{\ast}(\x))$, where $\y^{\ast}(\x) = \arg\max_{\y \in \Y} f(\x,\y)$.
\end{lemma}

\begin{proof}[\textbf{Proof of Lemma \ref{lem:inminmax_grad}}]

To begin with, we will compute the gradient of function $H(\x,\lb) = \max_{\yy}  L(\x,\y,\lb)$, defined in \eqref{eq:H}.
Indeed, notice that $\Y$ is a compact set, $L$ is differentiable and has a unique maximum due to its strong-concavity in $\y$. Then, from Danskin's theorem \cite{bernhard1995danskin} we can infer that $H(\x,\lb)$ is differentiable with:
\begin{align}\label{eq:H_grad}
    \nabla H(\x,\lb) 
= \g L\left(\x,\yo\left(\x,\lb\right), \lb\right) 
= \begin{bmatrix} \gx f(\x,\yo\left(\x,\lb\right)) - A^{T}\lb\\ 
-A\x -B \yo\left(\x,\lb\right) +\cb \end{bmatrix}, 
\end{align}
where $\yo(\x,\lb) := \arg\max\limits_{\yy} L(\x,\y,\lb)$.

Next we will show that  $H(\x,\lb)$ is strongly-convex in $\x$, with modulus $\mu_x$. Indeed for any $\x_1,\x_2 \in \X, \; \rho \in [0,1]$ we have the following series of relations:
\begin{align*}
&H(\rho\x_1 + (1-\rho)\x_2,\lb) 
= \max_{\yy} \left\{L(\rho\x_1 + (1-\rho)\x_2,\y, \lb)\right\} \\
& \stackrel{(a)}{=} \max_{\yy} \left\{ f(\rho \x_1 + (1-\rho ) \x_2, \y) - \lb^{T}\left(A\left(\rho \x_1 + (1-\rho ) \x_2\right) + B\y- \cb \right)\right\} \\
&\stackrel{(b)}{\leq} \max_{\yy} \bigg\{ \rho f(\x_1, \y) + (1-\rho)f(\x_2, \y)
 - \frac{\mu_{x}}{2} \rho (1 - \rho) \|\x_1 - \x_2 \|^{2}  \\
& \quad -\rho \lb^{T} A\x_1 - (1-\rho ) \lb^{T} A \x_2 -  \lb^{T}B \y+ \lb^{T}\cb \bigg\} \\
&\stackrel{(c)}{\leq} \max_{\yy} \left\{ \rho f(\x_1, \y) - \rho \lb^{T}\left(A\x_1 + B \y -\cb\right)\right\} - \frac{\mu_{x}}{2} \rho (1 - \rho) \|\x_1 - \x_2 \|^{2}  \\
& \quad+\max_{\yy} \left\{ (1-\rho)f(\x_2, \y) - (1-\rho)\lb^{T} \left( A\x_2 +B \y -\cb \right) \right\} 
 \\
& \leq \rho H(\x_1,\lb) + (1-\rho)H(\x_2,\lb) - \frac{\mu_{x}}{2} \rho (1 - \rho) \|\x_1 - \x_2 \|^{2},
\end{align*}
where in (a) we use the definition of Lagrangian from equation \eqref{eq:L}, in (b) the inequality follows from the strong convexity of $f$ w.r.t.\ $\x$ (with modulus $\mu_x$), and in (c) the triangle inequality of the max operator was used.

Moreover, note that:
\begin{align}\label{eq:pf_innminmax_1}
G(\lb) = \min\limits_{\xx}  \max\limits_{\yy}  L(\x,\y,\lb)
= \min\limits_{\xx} H(\x,\lb)
= - \max\limits_{\xx} \{ -H(\x,\lb)\}. 
\end{align}
Using the fact that $\X$ is a compact set, $-H$ is differentiable and strongly concave in $\x$, we can apply the Danskin's theorem \cite{bernhard1995danskin} and obtain:
\begin{align*}
\g G(\lb) &= 
\g_{\lb} H\left(\xo\left(\lb\right),\lb\right) =-A\xo(\lb) - B\yo\left(\xo\left(\lb\right),\lb\right) + \cb,
\end{align*}
where in the last equality we used the gradient of function $H(\x,\lb)$ from equation \eqref{eq:H_grad}, and  the definition of $\xo(\lb) = \arg\min\limits_{\xx}  H(\x,\lb)$.
The proof is now completed.
\end{proof}

\begin{proof}[\textbf{Proof of Proposition \ref{pro:stat}}]
Let $\lo \geq \bf{0}$ be a stationary point of $G(\lb)$, that is: $$\left\langle \nabla G\left(\lo\right),\lb-\lo \right\rangle \geq 0, \quad \forall \lb \geq 0.$$
Substituting the formula of the gradient of $G(\lb)$ from Lemma \ref{lem:inminmax_grad} gives:
\begin{align}\label{eq:stat_prf}
  \left\langle -A\xo\left(\lo\right) - B\yo\left(\xo\left(\lo\right),\lo\right) + \cb,\lb-\lo\right\rangle \geq 0, \quad \forall \lb \geq 0.  
\end{align}%

We will first show that these conditions imply that 
$A\xo\left(\lo\right) + B\yo\left(\xo\left(\lo\right),\lo\right) \leq \cb$. 
Suppose that there exists index $i \in \mathcal{K}$ such that $\left[-A\xo\left(\lo\right) - B\yo\left(\xo\left(\lo\right),\lo\right) + \cb\right]_{i} < 0$. Then, if we select $\lambda_{i} = \overline{\lambda}_{i} + \epsilon$, for some $\epsilon>0$, and $\lambda_{j} = \overline{\lambda}_{j}, \forall j \neq i$, \eqref{eq:stat_prf} becomes:
\begin{align*}
&\left\langle -A\xo\left(\lo\right) - B\yo\left(\xo\left(\lo\right),\lo\right) + \cb,\lb-\lo\right\rangle \\
&= \left[-A\xo\left(\lo\right) - B\yo\left(\xo\left(\lo\right),\lo\right) + \cb\right]_{i} \left(\overline{\lambda}_{i} + \epsilon -\overline{\lambda}_{i} \right) \\
&=\left[-A\xo\left(\lo\right) - B\yo\left(\xo\left(\lo\right),\lo\right) + \cb\right]_{i}\epsilon < 0.
\end{align*}
This is a contradiction.
As a result, $\left[-A\xo\left(\lo\right) - B\yo\left(\xo\left(\lo\right),\lo\right) + \cb\right]_{i} \geq 0, \forall i \in \mathcal{K}$. 

Next, we  show that the following holds:
\begin{align}\label{eq:slackness}
    \overline{\lambda_{i}} \left[-A\xo\left(\lo\right) - B\yo\left(\xo\left(\lo\right),\lo\right) + \cb\right]_{i} = 0, \quad \forall i \in \mathcal{K}.
\end{align}
 If  $\left[-A\xo\left(\lo\right) - B\yo\left(\xo\left(\lo\right),\lo\right) + \cb\right]_{i} = 0$, then the above equality holds trivially. 
Suppose that there exists $\left[-A\xo\left(\lo\right) - B\yo\left(\xo\left(\lo\right),\lo\right) + \cb\right]_{i} > 0$, such that $\overline{\lambda_{i}}>0$. Then, notice that we can select $\lambda_{i} = \overline{\lambda}_{i} - \epsilon$ (for some sufficiently small $\epsilon>0$) and $\lambda_{j} = \overline{\lambda}_{j}, \forall j \neq i$, substitute them into condition \eqref{eq:stat_prf}, and obtain:
\begin{align*}
&\left\langle -A\xo\left(\lo\right) - B\yo\left(\xo\left(\lo\right),\lo\right) + \cb,\lb-\lo\right\rangle \\
&= \left[-A\xo\left(\lo\right) - B\yo\left(\xo\left(\lo\right),\lo\right) + \cb\right]_{i}\left(\overline{\lambda}_{i} - \epsilon -\overline{\lambda}_{i}\right)\\
&= \left[-A\xo\left(\lo\right) - B\yo\left(\xo\left(\lo\right),\lo\right) + \cb\right]_{i}(-\epsilon) < 0,
\end{align*}
which is a contradiction to \eqref{eq:stat_prf}. This completes the proof of \eqref{eq:complementarity}.

From Definition \ref{def:D2_stat} we know that at a stationary point $\left(\xo,\yo,\lo\right)$ it holds that $\left(\xo,\yo\right)= \left( \xo\left(\lo\right),\yo\left(\xo,\lo\right) \right)$. 
Then, the optimality condition of $\yo$ follows directly from the definition of $\yo\left(\xo,\lo\right)$ [see \eqref{eq:argminmax}], i.e.,
\begin{align}\label{eq:stat_lem_y}
\yo = \yo\left(\xo,\lo\right)
= \arg\max_{\yy} L(\xo,\y,\lo). 
\end{align}
Similarly, from \eqref{eq:argminmax}, we have
$
\xo = \xo\left(\lo\right) = 
\arg\min\limits_{\xx} \left\{H\left(\x,\lo\right) \right\}. 
$
This concludes the proof.
\end{proof}

\begin{proof}[\textbf{Proof of Proposition \ref{pro:appr_stat}}]
Let $(\xt,\yt, \lt)$ be an $(\epsilon, \delta)$-approximate stationary solution. In order to compute a bound for the (magnitude of) violation of the constraint at $(\xt,\yt, \lt)$, we consider the following quantity, for an arbitrary $i \in \mathcal{K}$: 
\begin{align}\label{eq:prf_viol1}
\max\left\{0,\left[ A\xt+B\yt-\cb \right]_i \right\} 
&=\max\bigg\{0,\left[A\left(\xt-\xo\left(\lt\right)\right)+B\left(\yt-\yo\left(\xo\left(\lt\right),\lt\right)\right)\right]_i  \nonumber \\
&+ \left[A\xo\left(\lt\right)+B\yo\left(\xo\left(\lt\right),\lt\right) -\cb \right]_i \bigg\} \nonumber \\
&\leq \max\left\{0,\left[A\left(\xt-\xo\left(\lt\right)\right)+B\left(\yt-\yo\left(\xo\left(\lt\right),\lt\right)\right)\right]_i\right\} \nonumber\\
& + \max\left\{0, \left[A\xo\left(\lt\right)+B\yo\left(\xo\left(\lt\right),\lt\right) -\cb \right]_i\right\}.
\end{align}
We can bound the first term in the above inequality as follows:
\begin{align}\label{eq:prf_viol2}
    &\max\left\{0,\left[A\left(\xt-\xo\left(\lt\right)\right)+B\left(\yt-\yo\left(\xo\left(\lt\right),\lt\right)\right)\right]_i\right\} \nonumber\\
    &\leq
    \bigg| \left[A\left(\xt-\xo\left(\lt\right)\right)+B\left(\yt-\yo\left(\xo\left(\lt\right),\lt\right)\right)\right]_i \bigg| \nonumber \\
    &\stackrel{(a)}{\leq} \|A\left(\xt-\xo\left(\lt\right)\right)+B\left(\yt-\yo\left(\xo\left(\lt\right), \lt\right)\right)\| \nonumber \\
    &\stackrel{(b)}{\leq} \|A\|\|\xt-\xo\left(\lt\right) \| + \|B\| \|\yt-\yo\left(\xo\left(\lt\right), \lt\right) \| \nonumber\\
    & \stackrel{(c)}{\leq} \sigma_{\rm max} \cdot \left( \|\xt-\xo\left(\lt\right) \| + \|\yt-\yo\left(\xo\left(\lt\right),\lt\right) \| \right)\stackrel{(d)}{\leq} 2\sigma_{\rm max} \cdot \delta,
\end{align}
where in (a) we used the fact that for a vector $\x$ we have $|x_i| \leq \sqrt{\sum_{i =1}^{n} x_i^2}=\|\x\|$; in (b) we used the triangle inequality and the following property of norms: $\|A\x\| \leq \|A\| \|\x\|$; in (c) we used the definition: $\sigma_{\rm max} := \max\{\|A\|,\|B\|\}$; and in (d) we used the condition $d(\xt,\yt, \lt) = \|\xo\left(\lt\right) - \xt \|^2 + \|\yo\left(\xo\left(\lt\right),\lt\right) - \y \|^2 \leq \delta^{2}$ from Definition \ref{def:D2_approx_stat}, which implies that $\|\xo\left(\lt\right) - \xt \| \leq \delta$ and $\|\yo\left(\xo\left(\lt\right),\lt\right) - \y \| \leq \delta$.

To derive a bound for the  second term of \eqref{eq:prf_viol1}, recall that the  definition of approximate stationarity requires that $\|Q\left(\lt\right)\| \leq \epsilon$. This implies that:
\begin{align}\label{eq:prf_viol3}
    \epsilon& \ge \bigg| \left[ \frac{1}{\alpha} \left( \lt - \text{proj}_{\R^{k}_{+}}\left( \lt-\alpha\g G\left(\lt\right) \right) \right)\right]_{i} \bigg|  \nonumber\\
    & \stackrel{(i)}= \bigg| \frac{1}{\alpha} \left( \widetilde{\lambda}_{i} - \max\left\{0, \widetilde{\lambda}_{i}-\alpha\g_{i} G\left(\lt\right) \right\} \right)  \bigg| \nonumber\\
    & \stackrel{(ii)} = \bigg| \frac{1}{\alpha} \left( \widetilde{\lambda}_{i} -  \max\left\{0, \widetilde{\lambda}_{i}-\alpha  \left[-A\xo\left(\lt\right)-B\yo\left(\xo\left(\lt\right),\lt\right) +\cb\right]_{i} \right\} \right)  \bigg|
\end{align}
 where $(i)$ uses the fact that the projection is performed w.r.t. the set $\R^{k}_{+}$, thus it can applied component-wise; $(ii)$ uses the expression for $\nabla G\left(\lt\right)$ derived in Lemma \ref{lem:inminmax_grad}. 
 
Suppose that the $i$th constraint is violated, i.e., {\small $\left[-A\xo\left(\lt\right)-B\yo\left(\xo\left(\lt\right),\lt\right) +\cb\right]_{i} < 0$}. Then condition \eqref{eq:prf_viol3} takes the following form:
\begin{align}\label{eq:prf_viol4}
     &\bigg| \frac{1}{\alpha} \left( \widetilde{\lambda}_{i} - \widetilde{\lambda}_{i}+\alpha  \left[-A\xo\left(\lt\right)-B\yo\left(\xo\left(\lt\right),\lt\right) +\cb\right]_{i} \right)  \bigg| \leq \epsilon \nonumber\\ 
    \Rightarrow&\bigg|  \left[-A\xo\left(\lt\right)-B\yo\left(\xo\left(\lt\right),\lt\right) +\cb\right]_{i} \bigg| \leq \epsilon \nonumber\\ 
    \Rightarrow&\left[A\xo\left(\lt\right)+B\yo\left(\xo\left(\lt\right),\lt\right) -\cb\right]_{i} \leq \epsilon,
\end{align}
where the first relation uses the fact that $\tilde{\lambda}_i\ge 0$, so the argument inside the max function is positive.

Overall, in the case where $\left[-A\xo\left(\lt\right)-B\yo\left(\xo\left(\lt\right),\lt\right) +\cb\right]_{i} \geq 0$ holds, then \eqref{eq:prf_viol1} and \eqref{eq:prf_viol2} together imply that:
\begin{align*}
\max\left\{0,\left[ A\xt+B\yt-\cb \right]_i \right\} 
\leq 2 \sigma_{\rm max} \delta + 0 \leq 2 \sigma_{\rm max} \delta. 
\end{align*}

On the other hand, if $\left[-A\xo\left(\lt\right)-B\yo\left(\xo\left(\lt\right),\lt\right) +\cb\right]_{i} < 0$, then combining  \eqref{eq:prf_viol1}, \eqref{eq:prf_viol2} and \eqref{eq:prf_viol4} yields
$
\max\{0,\left[ A\xt+B\yt-\cb \right]_i \} 
\leq 2 \sigma_{\rm max} \delta + \epsilon.
$

Finally, we know from Definition \ref{def:D2_approx_stat} that the following condition holds at the approximate stationary point $\left(\xt,\yt,\lt\right)$:
$$d\left(\xt,\yt, \lt\right)  = \|\xo\left(\lt\right) - \xt \|^2 + \|\yo\left(\xo\left(\lt\right),\lt\right) - \yt \|^2 \leq \delta^{2}.$$ 
Therefore, it follows directly that 
$
\|\yo - \yt \| \leq \delta, \; \|\xo - \xt \|\leq \delta,
$
where $(\xo,\yo) = \left( \xo\left(\lt\right),\yo\left(\xo,\lt\right) \right)$. This completes the proof.
\vspace{-8mm}
\begin{align*}
\end{align*}

\end{proof}

\bibliographystyle{siamplain}
\bibliography{references}
\end{document}